\documentclass[12pt]{amsart}
\usepackage{amsmath}
\usepackage{amssymb}
\usepackage{latexsym}
\usepackage{amscd}
\usepackage{citesort}

\newdimen\AAdi%
\newbox\AAbo%
%
\def\AAk#1#2{\s_etbox\AAbo=\hbox{#2}\AAdi=\wd\AAbo\kern#1\AAdi{}}%
\def\AAr#1#2#3{\s_etbox\AAbo=\hbox{#2}\AAdi=\ht\AAbo\raise#1\AAdi\hbox{#3}}%
\font\tenmsb=msbm10 at 12pt \font\sevenmsb=msbm7 at 8pt
\font\fivemsb=msbm5 at 6pt
\newfam\msbfam
\textfont\msbfam=\tenmsb \scriptfont\msbfam=\sevenmsb
\scriptscriptfont\msbfam=\fivemsb
\def\Bbb#1{{\tenmsb\fam\msbfam#1}}
\textwidth 15.00cm \textheight 22cm \topmargin 0cm \oddsidemargin
0.5cm \evensidemargin 0.5cm
\parindent = 5 mm
\hfuzz     = 6 pt
\parskip   = 3 mm

\newtheorem{thm}{Theorem}[section]
\newtheorem{lem}{Lemma}[section]

\newtheorem{rem}{Remark}[section]
\newtheorem{pro}{Proposition}[section]

\newcommand{\ba}{\begin{array}}
\newcommand{\ea}{\end{array}}

\newcommand{\Section}[2]{\setcounter{equation}{0}
\allowdisplaybreaks
\section[#1]{#2}}

\def\n{\nabla}

\def\ir#1{\mathbb R^{#1}}

\def\f#1#2{\frac{#1}{#2}}

\def\grs#1#2{\bold G_{#1,#2}}

\def\pd#1#2{\frac {\partial #1}{\partial #2}}

\def\td{\tilde}

\def\a{\alpha}
\def\be{\beta}

\def\de{\delta}
\def\De{\Delta}
\def\e{\eta}
\def\ep{\varepsilon}

\def\g{\gamma}

\def\la{\lambda}

\def\Om{\Omega}
\def\th{\theta}

\def\w{\wedge}

\def\U{\Bbb{U}}
\def\V{\Bbb{V}}
\def\ol{\overline}
\def\Hess{\mbox{Hess}}
\def\R{\Bbb{R}}

\begin{document}
\title
[Curvature estimates]{Curvature estimates for submanifolds with
prescribed Gauss image and  mean curvature}

\author
[Y.L. Xin]{Y. L. Xin}
\address
{Institute of Mathematics, Fudan University, Shanghai 200433, China
and Key Laboratory of Mathematics for Nonlinear Sciences (Fudan
University), Ministry of Education} \email{ylxin@fudan.edu.cn}
\thanks{The research was partially supported by NSFC}
\begin{abstract}
We study that the $n-$graphs defining by smooth map $f:\Om\subset
\ir{n}\to \ir{m}, m\ge 2,$ in $\ir{m+n}$ of the prescribed mean
curvature and the Gauss image. We derive the interior curvature
estimates
$$\sup_{D_R(x)}|B|^2\le\f{C}{R^2}$$
under the dimension limitations and the Gauss image restrictions. If
there is no dimension limitation we obtain
$$\sup_{D_R(x)}|B|^2\le C
R^{-a}\sup_{D_{2R}(x)}(2-\De_f)^{-\left(\f{3}{2}+\f{1}{s}\right)},
\qquad s=\min(m, n)$$ with $a<1$ under the condition
$$\De_f=\left[\text{det}\left(\de_{ij}+\sum_\a\pd{f^\a}{x^i}\pd{f^\a}{x^j}\right)\right]^{\f{1}{2}}<2.$$
If the image under the Gauss map is contained in a geodesic ball of
the radius  $\f{\sqrt{2}}{4}\pi$ in $\grs{n}{m}$ we also derive
corresponding estimates.
\end{abstract}

\renewcommand{\subjclassname}{%
  \textup{2000} Mathematics Subject Classification}
\subjclass{49Q05, 53A07, 53A10.}
\date{}
\maketitle

\Section{Introduction}{Introduction}
\medskip

There are many beautiful results on minimal hypersurfaces and we
have  a fairly profound understanding of the issue of minimal
hypersurfaces in many aspects, the issue of minimal submanifolds of
higher codimension seems more complicated. Lawson-Osserman's paper
revealed several important different phenomena  in higher
codimension \cite{l-o}.

For higher codimensional Bernstein problem Hildebrandt-Jost-Widman
\cite{h-j-w} gave us the following result.

\begin{thm}\label{C} Let $z^\a=f^\a(x), \a=1,\cdots,m,\;x=(x^1,\cdots,x^n)\in
\ir{n}$ be the $C^2$ solution to the system of minimal surface
equations. Let there exist $\be,$ where
$$\be<\cos^{-s}\left(\frac \pi{2\sqrt{s\,K}}\right),\quad K=\begin{cases} 1 \quad\text{if}\quad s=1\cr
         2 \quad \text{if} \quad s\ge 2\end{cases},\quad s=\min(m, n)$$
such that for any $x\in \ir{n}$,
$$\De_f=\left[\text{det}\left(\de_{ij}+\sum_\a\pd{f^\a}{x^i}\pd{f^\a}{x^j}\right)\right]^{\f{1}{2}}<\be,$$
then $f^1,\cdots,f^m$ are affine linear functions on $\ir{n},$
whose graph is an affine $n-$plane in $\ir{m+n}.$
\end{thm}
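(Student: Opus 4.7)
The plan is to exploit the fact that the Gauss map of a minimal submanifold is harmonic and reduce Theorem~\ref{C} to a Liouville-type statement for harmonic maps into the Grassmannian. Let $M=\text{graph}(f)\subset\ir{m+n}$; the minimal surface system for $f$ is precisely the condition that $M$ be a minimal $n$-submanifold of $\ir{m+n}$. By the Ruh--Vilms theorem the associated Gauss map $\g:M\to\grs{n}{m}$ is then harmonic with respect to the induced metric on $M$ and the canonical invariant metric on the Grassmannian.

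The first substantive step is to translate the analytic hypothesis $\De_f<\be$ into a geometric restriction on $\g(M)$. At $x\in\ir{n}$ let $\la_1,\dots,\la_s$ be the singular values of $df(x)$; then $\De_f(x)=\prod_{i=1}^s\sqrt{1+\la_i^2}$, and the Jordan angles between $T_{(x,f(x))}M$ and the horizontal $n$-plane $P_0\in\grs{n}{m}$ are exactly $\th_i=\arctan\la_i$. The form of $\be=\cos^{-s}\!\bigl(\pi/(2\sqrt{sK})\bigr)$ is calibrated so that $\De_f<\be$ forces the image $\g(M)$ into a geodesic ball $B_r(P_0)\subset\grs{n}{m}$ of radius $r<\pi/(2\sqrt{sK})$ centered at $P_0$.

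The next step, which I expect to be the technical core, is to produce a strictly convex function $\phi:B_r(P_0)\to\R$ whose Hessian is bounded below by a positive constant. This is carried out by a Jacobi-field analysis along geodesics emanating from $P_0$, using the explicit sectional curvature identities for $\grs{n}{m}$. The dichotomy $K=1$ (when $s=1$) versus $K=2$ (when $s\ge2$) reflects the sharp upper bound for the ambient sectional curvature in the two-planes seen by Jordan-angle geodesics; a Rauch-type comparison yields $\Hess\,\phi>0$ precisely in the regime $r<\pi/(2\sqrt{sK})$, and this sharp radius is what makes the construction delicate.

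Once such a $\phi$ is in hand, the standard composition formula for harmonic maps gives $\De_M(\phi\circ\g)\ge c\,|d\g|^2$ on all of $M$ for some $c>0$, so $\phi\circ\g$ is a nonnegative bounded subharmonic function on the complete minimal graph $M$. Since $M$ projects diffeomorphically onto $\ir{n}$ it has at most Euclidean volume growth on intrinsic balls; a Liouville-type argument (Moser Harnack, or parabolicity of $M$) then forces $\phi\circ\g$ to be constant, and the strict convexity of $\phi$ yields $|d\g|\equiv0$. Hence the tangent plane of $M$ is parallel and $f$ is affine linear. The main obstacle is the sharp convex-function construction of the third step; everything before and after it is comparatively routine once that is in place.
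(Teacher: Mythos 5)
Theorem~\ref{C} is the Hildebrandt--Jost--Widman Bernstein theorem, which this paper quotes from \cite{h-j-w} without reproducing the proof, so there is no in-paper argument to compare yours against; your outline does, however, track the skeleton of their original harmonic-map argument. Your first three stages are sound. Ruh--Vilms gives harmonicity of $\g$. Writing $\De_f=\prod_{i=1}^s\sec\th_i$ and $d(\g,P_0)=\bigl(\sum\th_i^2\bigr)^{1/2}$, the conversion of $\De_f<\be$ into the geodesic-ball bound $d(\g,P_0)<\pi/(2\sqrt K)$ follows from Jensen's inequality applied to the convex, increasing function $t\mapsto\log\sec\sqrt t$ on $[0,(\pi/2)^2)$; that is where the $\sqrt s$ in the radius comes from, and it should be made explicit rather than asserted as a calibration. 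The Hessian comparison theorem then produces on such a ball a smooth $\phi$ with $\Hess\phi\ge c\,g$ for some $c>0$, exactly as in \S 3 of this paper where $h=\cos(\sqrt2\rho)$ is used with the curvature bound $K=2$; the Jacobi-field/Rauch mechanism you invoke is the standard one.

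The genuine gap is in the final Liouville step. Of the two routes you offer, parabolicity is false for $n\ge3$: under $\De_f<\be$ the induced metric on $M$ has eigenvalues in $[1,\be^2]$, so $M$ is bi-Lipschitz to $\ir n$, which is non-parabolic for $n\ge3$ and carries nonconstant bounded subharmonic functions. Hence ``$\phi\circ\g$ bounded, nonnegative, subharmonic'' plus Euclidean volume growth does not force constancy. The Harnack route is the right one, but it needs the quantitative form of the composition inequality, $\De_M(\phi\circ\g)\ge c\,|d\g|^2$, not merely subharmonicity. Testing this against cutoff functions yields $\int_{D_R}|d\g|^2\le C$ uniformly in $R$; one then combines this with Moser iteration for the harmonic-map system (available here because $M$ is uniformly bi-Lipschitz to $\ir n$ and so supports Sobolev and Poincar\'e inequalities with Euclidean constants) to obtain the scale-invariant interior estimate $\sup_{D_{R/2}}|d\g|\le C/R$, and letting $R\to\infty$ gives $d\g\equiv0$. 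Equivalently one can argue via a uniform oscillation estimate for $\g$ on $D_R$ followed by interior elliptic estimates. Without this decay estimate the argument does not close, so the burden of the proof really sits in the Harnack/mean-value machinery that you only gesture at.
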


The theorem not only generalized Moser's  result \cite{m} to
higher codimension, but also introduced the Gauss image
assumption. The geometric meaning of the conditions in the above
theorem is that the image under the Gauss map lies in a closed
subset of an open geodesic ball of the radius
$\f{\sqrt{2}}{4}\pi$ in the Grassmannian manifold $\grs{n}{m}$.
It would be remind that the Gauss maps play an important role in
minimal surface theory. For general surfaces in $\ir{3}$ the
Gauss map and the mean curvature determine a simply connected
surface completely \cite{k}.

Later, in the author's joint work with J. Jost \cite{j-x}, the above
theorem has been improved that the number in the the theorem is $2$,
instead of $\cos^{-s}\left(\frac \pi{2\sqrt{s\,K}}\right)$, which is
independent of the dimension and the codimension. The key point is
to find larger geodesic convex set $B_{JX}\subset\grs{n}{m}$ which
contains the geodesic ball of radius $\f{\sqrt{2}}{4}\pi.$

In author's previous work, Schoen-Simons-Yau type curvature
estimates \cite{s-s-y} and Ecker-Huisken type curvature estimates
\cite{e-h} have be generalized to the flat normal bundle situation
\cite{x1}  \cite{s-w-x}.  Using some techniques in \cite{e-h1},
Fr\"ohlich-Winklmann \cite{f-w} derived interior  curvature
estimates for the flat normal bundle case and generalized our
results in \cite{s-w-x}.

Recently, we studied complete minimal submanifolds with codimension
$m\ge 2$ and curved normal bundle, but with the convex Gauss image.
Thus, we can construct auxiliary functions, which enable us to carry
out both Schoen-Simons-Yau type curvature estimates and
Ecker-Huisken type curvature estimates in \cite{x-y1} and
\cite{x-y2}. From the estimates several geometrical conclusions
follow, including the following Bernstein type theorems.

\begin{thm}\label{xy1}
Let $M$ be a complete minimal $n$-dimensional submanifold in
$\ir{n+m}$  with $n\leq 6$ and $m\ge 2$. If the Gauss image of $M$
is contained in an open geodesic ball of $\grs{n}{m}$ centered at
$P_0$ and of radius $\f{\sqrt{2}}{4}\pi$, then $M$ has to be an
affine linear subspace.

\end{thm}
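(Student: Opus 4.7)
The plan is to reduce the Bernstein statement to an $L^p$ curvature estimate on large balls and then let the radius tend to infinity. Since $M$ is minimal, the Ruh--Vilms theorem asserts that the Gauss map $\g : M \to \grs{n}{m}$ is a harmonic map, and its energy density is identified with the squared norm of the second fundamental form: $|\n \g|^2 = |B|^2$. The Gauss image restriction to the geodesic ball of radius $\f{\sqrt{2}}{4}\pi$ is exploited by composing $\g$ with a strictly convex function on that ball.

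First I would invoke the existence of a smooth strictly convex function $v$ on the open geodesic ball of radius $\f{\sqrt{2}}{4}\pi$ centered at $P_0$ in $\grs{n}{m}$, whose Hessian is bounded below by a positive multiple of the metric; such a function is implicit in \cite{h-j-w} and developed explicitly in the author's joint work with Jost \cite{j-x}. Because $\g$ is harmonic, the chain rule for harmonic maps gives the pointwise bound
$$\De (v \circ \g) \ge c_0\, |B|^2$$
with a positive constant $c_0$ depending on the gap between $\g(M)$ and the boundary of the ball. Thus $M$ carries a bounded smooth function $w := v \circ \g$ whose Laplacian dominates $|B|^2$, and this $w$ will serve as the auxiliary function for all subsequent estimates.

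The next step is to combine $w$ with the Simons-type identity for $|B|^2$ in higher codimension, which for minimal submanifolds takes the form $\De |B|^2 \ge 2|\n B|^2 - C(n,m)\, |B|^4$. Following the Schoen--Simons--Yau scheme \cite{s-s-y}, I would test a power $|B|^{2p}$, multiplied by a positive function built from $w$ (e.g.\ $(a - w)^{-q}$ for a suitable constant $a > \sup_M w$), against a Lipschitz cut-off supported in a ball $D_{2R} \subset M$, integrate by parts, and use the refined Kato inequality available on minimal submanifolds. The bound $\De w \ge c_0\, |B|^2$ together with the boundedness of $w$ absorbs the problematic reaction term $|B|^4$. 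After applying the Michael--Simon Sobolev inequality on $M$, the standard iteration yields an estimate of the form $\int_{D_R} |B|^{2p} \le C\, R^{n-2p}$ for a range of exponents $p$. The dimension hypothesis $n \le 6$ is precisely what guarantees that this range contains a value of $p$ for which the right-hand side tends to zero as $R \to \infty$, using the at most Euclidean volume growth of $D_R$ provided by the Gauss image restriction via a monotonicity argument. Letting $R \to \infty$ forces $|B| \equiv 0$, so $M$ is totally geodesic and hence an affine $n$-plane in $\ir{n+m}$.

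The main obstacle I expect is the quantitative interplay between three ingredients: the size of the constant $c_0$ in $\De w \ge c_0 |B|^2$, the constant and the Kato improvement in the higher-codimension Simons inequality, and the Sobolev exponent $\f{n}{n-2}$. All three must cooperate so that, exactly up through $n = 6$, an admissible exponent $p$ closes the iteration. Producing a convex function $v$ on the ball of radius $\f{\sqrt{2}}{4}\pi$ with a sharp enough lower Hessian bound, and then carrying out the delicate bookkeeping that aligns the above constants, is the heart of the argument.
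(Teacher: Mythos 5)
Your overall strategy---harmonicity of the Gauss map by Ruh--Vilms, a convexity argument on the Grassmannian, the Simons-type Bochner formula with a refined Kato inequality, and a dimension count isolating $n \le 6$---is indeed the right framework, and matches the paper's method in broad outline. However, the auxiliary function construction as you describe it does not go through under the hypothesis actually in the theorem. There is no smooth function $v$ on the \emph{open} geodesic ball $B_{\f{\sqrt{2}}{4}\pi}(P_0)\subset\grs{n}{m}$ that is simultaneously bounded and satisfies $\Hess(v)\ge c_0\,g$ for a fixed constant $c_0>0$: by Hessian comparison, any radial function with a uniform tangential Hessian lower bound forces $f'(\rho)\gtrsim \tan(\sqrt{2}\rho)$, which blows up as $\rho\to\f{\sqrt{2}}{4}\pi$. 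You implicitly concede this when you let $c_0$ depend on ``the gap between $\g(M)$ and the boundary of the ball,'' but the theorem assumes only containment in the open ball, so the Gauss image may approach the boundary and your $c_0$ degenerates. At that point $\De w\ge c_0|B|^2$ no longer absorbs the $|B|^4$ reaction term, and the ``boundedness of $w$'' you invoke is incompatible with the uniform lower Hessian bound. The scenario where the image stays away from the boundary is handled by a different theorem (the growth-conditioned version, Theorem \ref{xy3}), not by the dimension restriction.

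The paper avoids this obstruction by using a \emph{concave} auxiliary function, $h=\cos(\sqrt{2}\rho)$, which is positive on the open ball and satisfies $\Hess(h)\le -2h\,g$. Composing with the harmonic Gauss map gives $\De h\le -2|B|^2 h$ on a minimal $M$, and the key integration by parts,
\begin{equation*}
2\int_M|B|^2\phi^2 *1 \;\le\; -\int_M h^{-1}\phi^2\,\De h *1
\;=\; -\int_M h^{-2}|\n h|^2\phi^2*1 + 2\int_M h^{-1}\phi\,\n h\cdot\n\phi*1
\;\le\; \int_M|\n\phi|^2*1,
\end{equation*}
is \emph{uniform in $h$}: the $h^{-2}|\n h|^2$ contributions cancel exactly, so the estimate survives even as $h\to0$ near the boundary. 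It is this stability-type inequality (\ref{lp1}), not a pointwise absorption, that the Schoen--Simons--Yau scheme feeds on: replacing $\phi$ by $|B|^{1+q}\phi$ and combining with $\De|B|^2\ge 2\big(1+\f{2}{n}\big)\big|\n|B|\big|^2-3|B|^4$ yields $\int_{D_R}|B|^p\le C R^{-p}\,\text{Vol}(D_{2R})$ for $p\in\big[4,\,4+\f{2}{3}+\f{4}{3}\sqrt{1+\f{6}{n}}\,\big)$. Since the Gauss image restriction forces Euclidean volume growth, letting $R\to\infty$ kills $|B|$ whenever some admissible $p$ exceeds $n$, and that window is nonempty exactly for $n\le 6$. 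Note also that no Michael--Simon Sobolev inequality or Moser iteration is needed for this Bernstein conclusion; those tools enter only when one upgrades the $L^p$ bound to a pointwise curvature estimate, as in the paper's Theorems \ref{x1} and \ref{x2}.
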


\begin{thm}\label{xy2}
Let
 $M=(x,f(x))$ be an $n$-dimensional entire minimal graph given by $m$ functions
$f^\a(x^1,\cdots,x^n)$ with $n\le 4$ and $m\ge 2$. If
$$\De_f=\left[\text{det}\left(\de_{ij}+\sum_\a\pd{f^\a}{x^i}\pd{f^\a}{x^j}\right)\right]^{\f{1}{2}}<2,$$
then $f^\a$ has to be affine linear functions representing an affine
$n$-plane.
\end{thm}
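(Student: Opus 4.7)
\textbf{Proof plan for Theorem \ref{xy2}.}

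The strategy is to translate the algebraic hypothesis $\De_f<2$ into a Gauss image restriction, derive an interior curvature decay $\sup_{D_R}|B|^2\le C/R^2$ by a Schoen--Simon--Yau type iteration in the spirit of \cite{x-y1}, and then let $R\to\infty$ using that the graph is entire.

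First I would reinterpret the hypothesis geometrically. For the graph $M=(x,f(x))$ with Gauss map $\g:M\to \grs{n}{m}$, the quantity $\De_f$ is precisely the $w$-function measuring the Jordan angles of the tangent $n$-plane relative to the reference plane $P_0=\ir{n}\times\{0\}$. As noted in the introduction, Jost--Xin \cite{j-x} showed that $\De_f<2$ is equivalent to $\g(M)$ being contained in the larger geodesically convex set $B_{JX}\subset \grs{n}{m}$, on which strictly convex auxiliary functions of the Jordan angles are available.

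Second I would carry out the Schoen--Simon--Yau iteration as adapted to higher codimension in \cite{x-y1}. Choose a strictly convex function $\phi$ on $B_{JX}$ and set $v=\phi\circ\g$. Since $M$ is minimal, the composition formula reduces to $\De_M v=\Hess(\phi)(\g_*e_i,\g_*e_i)$, and since the differential of the Gauss map is controlled pointwise by the second fundamental form, the convexity of $\phi$ produces an inequality of the form $\De_M v\ge c_0|B|^2$ with $c_0>0$. Combined with the Simons identity $|B|\De_M|B|\ge |\n B|^2-C|B|^4-\text{(normal curvature terms)}$ valid in higher codimension, one tests a well chosen combination such as $|B|^p v^{-q}$ against a cutoff supported in $D_{2R}$ and applies the Michael--Simon Sobolev inequality on $M$ to run Moser iteration. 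The dimension restriction $n\le 4$ enters precisely at the step where the Sobolev exponent $\f{n}{n-2}$ must dominate the critical power of $|B|^2$ dictated by Simons' identity and by the weight $v^{-q}$. The iteration then produces $\sup_{D_R(x)}|B|^2\le C/R^2$ with $C$ depending only on $n,m$ and on $\sup_{D_{2R}}(2-\De_f)^{-1}$. Since $f$ is entire, $R$ can be taken arbitrarily large, forcing $|B|\equiv 0$; hence $M$ is totally geodesic and, being a graph, is an affine $n$-plane.

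The main obstacle will be the second step, specifically absorbing the sign-indefinite cross terms in Simons' identity arising from the normal curvature $R^\perp$ in codimension $m\ge 2$. The sharpness of the threshold $\De_f<2$ is crucial here, since only the larger convex region $B_{JX}$ admits auxiliary functions whose Hessian is strong enough to dominate the bad terms and to produce a weight $v^{-q}$ compatible with the iteration; the dimension bound $n\le 4$ then emerges from balancing the Sobolev and Simons exponents and appears to be the limit of this approach.
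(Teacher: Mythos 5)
The paper does not actually prove Theorem~\ref{xy2}: it states it as a result imported from \cite{x-y1} and then proves a stronger curvature estimate (Theorem~\ref{x1}, condition~(2), with the improved range $n\le 5$) that would imply it. Your outline tracks the general strategy used for that stronger result --- convert $\De_f<2$ to a Gauss-image restriction, produce a convex auxiliary function on the Grassmannian, run a Schoen--Simon--Yau type iteration to get $\sup_{D_R}|B|^2\le C/R^2$, and let $R\to\infty$ --- but as written it contains one gap that would destroy the conclusion.

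You assert that the constant $C$ in $\sup_{D_R(x)}|B|^2\le C/R^2$ depends on $\sup_{D_{2R}}(2-\De_f)^{-1}$. Under the hypothesis of Theorem~\ref{xy2} one only has the \emph{pointwise} bound $\De_f<2$, so $\sup_{D_{2R}}(2-\De_f)^{-1}$ can grow without bound as $R\to\infty$; with your constant, letting $R\to\infty$ proves nothing. This is precisely the situation addressed by Theorem~\ref{xy4}, which dispenses with the dimension restriction at the cost of the growth hypothesis $(2-\De_f)^{-1}=o(R^{4/3})$. The key mechanism that makes the low-dimensional Theorem~\ref{xy2} work without such a hypothesis is that the basic weighted $L^2$ inequality is $h$-free: multiplying $\De h\le-\la|B|^2 h$ by $-h^{-1}\phi^2$, integrating by parts and using the logarithmic cancellation $\nabla(h^{-1})\cdot\nabla h=-h^{-2}|\nabla h|^2$ together with Young's inequality yields $\la\int_M|B|^2\phi^2*1 \le \int_M|\nabla\phi|^2*1$ (this is exactly~(\ref{lp1}) with $\la_1=0$ in the minimal case); all terms involving $h$, and hence $(2-\De_f)^{-1}$, telescope away. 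Consequently the constant $C$ depends only on $n$ and $m$ in the minimal case, and $R\to\infty$ forces $|B|\equiv 0$. Your proposal, by retaining a factor of $\sup(2-\De_f)^{-1}$ in $C$, only recovers the weaker conclusion of Theorem~\ref{xy4}, not Theorem~\ref{xy2}.

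Two lesser inaccuracies worth flagging: (i) the difficulty in codimension $m\ge 2$ is not ``sign-indefinite normal-curvature cross terms in Simons' identity'' --- the Bochner--Simons inequality~(\ref{NLB'}) already packages those into the single term $-3|B|^4$, and the real obstruction is simply that this coefficient exceeds the hypersurface value (roughly $3$ rather than $2$), which must be overcome by the convexity constant $\la$ of the auxiliary function; (ii) the Kato-type refinement $|\nabla B|^2\ge(1+\f{2}{n})|\nabla|B||^2$ in the paper's Lemma (eq.~(\ref{kato})) is what makes the admissible exponent range nonempty in the relevant dimensions, and your outline does not mention it even though the dimension restriction $n\le 4$ (or $n\le 5$) cannot be obtained without it.
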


\begin{thm}\label{xy3}
Let $M$ be a complete minimal $n$-dimensional submanifold in
$\ir{n+m}$. If the Gauss image of $M$ is contained in an open
geodesic ball of $\grs{n}{m}$ centered at $P_0$ and of radius
$\f{\sqrt{2}}{4}\pi$, and $\big(\f{\sqrt{2}}{4}\pi-\rho\circ
\g\big)^{-1}$ has growth
\begin{equation}
\big(\f{\sqrt{2}}{4}\pi-\rho\circ \g\big)^{-1}=o(R),
\end{equation}
where $\rho$ denotes the distance on $\grs{n}{m}$ from $P_0$ and $R$
is the Euclidean distance from any point in $M$. Then $M$ has to be
an affine linear subspace.
\end{thm}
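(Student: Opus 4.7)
The plan is to combine an interior curvature estimate adapted to the Gauss-image hypothesis, in the spirit of the ``corresponding estimates'' announced in the abstract, with the sublinear growth of $\bigl(\f{\sqrt{2}}{4}\pi-\rho\circ\g\bigr)^{-1}$, and then let $R\to\infty$. Concretely, the target estimate is of the schematic form
$$\sup_{D_R(x_0)}|B|^2\ \le\ \f{C}{R^2}\,\Big[\sup_{D_{2R}(x_0)}\bigl(\f{\sqrt{2}}{4}\pi-\rho\circ\g\bigr)^{-1}\Big]^{q}$$
with constants $C>0$ and $q>0$ depending only on $n$ and $m$.

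To set it up, I would follow the strategy developed in \cite{x-y1} and \cite{x-y2}: choose a smooth convex increasing function $\phi:[0,\f{\sqrt{2}}{4}\pi)\to[0,\infty)$ that blows up at the right endpoint (for example $\phi(\rho)=(\f{\sqrt{2}}{4}\pi-\rho)^{-k}$ for a suitable $k>0$), set $v=\phi(\rho)\circ\g$ on $M$, and use the Hessian comparison on $\grs{n}{m}$---whose sectional curvatures are controlled on the ball of radius $\f{\sqrt{2}}{4}\pi$, the critical value coming precisely from the upper bound $2$ on the sectional curvatures (cf.\ \cite{j-x})---to produce a subelliptic inequality of the form
$$\De_M v\ \ge\ c_1\,|B|^2\, v^{p}-c_2$$
on $M$ (exploiting harmonicity of the Gauss map of a minimal submanifold and $|\nabla\g|^2=|B|^2$). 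Combining this inequality with Simons' identity for $|B|^2$ and an extrinsic cutoff, an Ecker--Huisken/Moser iteration as in \cite{e-h}, \cite{s-w-x} and \cite{x-y2} then yields the displayed curvature estimate.

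With the estimate in hand, fix any $x_0\in M$ and apply it on extrinsic balls $D_{2R}(x_0)$ of increasing radius. The growth hypothesis gives
$$\sup_{D_{2R}(x_0)}\bigl(\f{\sqrt{2}}{4}\pi-\rho\circ\g\bigr)^{-1}\ =\ o(R),$$
so the right-hand side of the estimate is bounded by $C\,R^{-2}\cdot o(R^{q})=o(R^{q-2})$. Provided the iteration can be arranged so that $q\le 2$, sending $R\to\infty$ forces $|B|^2(x_0)=0$. Since $x_0$ was arbitrary, $|B|\equiv 0$ on $M$, so $M$ is totally geodesic and hence an affine $n$-plane in $\ir{n+m}$.

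The main obstacle is the quantitative part of Step~2: deriving the curvature estimate with a \emph{small} power $q$ on the Gauss-image distance factor. Without a dimension restriction the Simons cross terms are harder to absorb, so one must balance the exponent $k$ in $\phi$---which controls the coefficient $p$ in the subelliptic inequality and therefore the growth of $v^{p}$---against the power lost in the Moser iteration, keeping the final exponent $q$ on $\sup v$ no larger than $2$. Arranging this balance without imposing a dimension bound is what makes the sharp form of the Hessian comparison for $\phi(\rho)$ on $\grs{n}{m}$ (in particular, the use of the composition formula with a convex single-variable $\phi$ rather than a direct Hessian estimate on $\rho$) the decisive technical ingredient.
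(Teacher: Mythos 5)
Your overall strategy --- derive a pointwise estimate of the schematic form $\sup_{D_R}|B|^2\le C\,R^{-2}\big[\sup_{D_{2R}}\big(\tfrac{\sqrt{2}}{4}\pi-\rho\circ\g\big)^{-1}\big]^{2}$ and then let $R\to\infty$ --- is precisely the mechanism by which the present paper recovers Theorem \ref{xy3} (itself quoted from \cite{x-y1}, \cite{x-y2}) as a consequence of its new interior curvature estimates. Specifically, the Remark following Theorem \ref{x2} asserts exactly this bound in the parallel-mean-curvature case: set $H=0$ (so $\la_2=\mu_1=0$), use Lemma \ref{mv} with $g=0$, and the constant depends only on $n,m$. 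The growth hypothesis then makes the right-hand side $o(1)$, so $|B|\equiv 0$. Your observation that $q\le 2$ is the threshold is correct; note, though, that the general-mean-curvature Theorem \ref{x2} gives only $R^{-a}$ with $a<1$, which is insufficient, so the minimal-case improvement is essential.

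The genuine gap is that you leave the quantitative heart of the argument --- actually obtaining $q\le 2$ together with the factor $R^{-2}$ --- as an acknowledged open issue (``Provided the iteration can be arranged...''). In the paper this is secured by the sharp Hessian inequality (\ref{lh1'}) for $h_1=\sec^2(\sqrt{2}\rho)$, namely $\Hess(h_1)\ge\mu\,h_1\,g+\tfrac{3}{2}h_1^{-1}dh_1\otimes dh_1$ with $\mu=4$, together with the asymptotics $h_1\sim(\tfrac{\sqrt{2}}{4}\pi-\rho)^{-2}$, which give $q=2$. Your schematic inequality $\De_M v\ge c_1|B|^2v^p-c_2$ omits the first-order term $\tfrac{3}{2}h_1^{-1}|\n h_1|^2$, and that term is not decoration: it is exactly what absorbs the cross term $4pq\,|B|^{2p-1}\n|B|\cdot h_1^{q-1}\n h_1$ in the computation leading to (\ref{sh}), jointly with the improved Kato inequality (\ref{kato}); without it the $L^p$ bound of Theorem \ref{t3} and the subsequent application of Lemma \ref{mv} do not close. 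So the ``balance of exponents'' you flag as the main obstacle is precisely where the proof's content lies, and the specific constants $\mu=4$ and $\tfrac{3}{2}$ in (\ref{lh1'}) are what you would need to supply to turn your plan into a proof.
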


\begin{thm}\label{xy4}
Let $M=(x,f(x))$ be an $n$-dimensional entire minimal graph given by
$m$ functions $f^\a(x^1,\cdots,x^n)$ with $m\geq 2$. If
$$\De_f=\left[\text{det}\left(\de_{ij}+\sum_\a\pd{f^\a}{x^i}\pd{f^\a}{x^j}\right)\right]^{\f{1}{2}}<2,$$
and
\begin{equation}
\left(2-\De_f\right)^{-1}=o(R^{\f{4}{3}}),
\end{equation}
where $R^2=|x|^2+|f|^2$. Then $f^\a$ has to be affine linear
functions and hence $M$ has to be an affine linear subspace.
\end{thm}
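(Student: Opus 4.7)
The plan is to deduce Theorem \ref{xy4} as a direct corollary of the interior curvature estimate
$$\sup_{D_R(x)}|B|^2 \le C\, R^{-a}\sup_{D_{2R}(x)}(2-\Delta_f)^{-\left(\f{3}{2}+\f{1}{s}\right)}, \qquad s = \min(m,n),$$
announced in the abstract and proved earlier in the paper, by a blow-down argument. All the work in the proof of Theorem \ref{xy4} itself is thus exponent-bookkeeping on top of that estimate.

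First I would fix an arbitrary point $p\in M$ and apply the estimate on the ball $D_R(p)$ for arbitrarily large $R$. Since the condition $\Delta_f<2$ provides a uniform bound on the singular values of $df$, the Euclidean quantity $R^2=|x|^2+|f|^2$ appearing in the hypothesis is comparable (up to a universal constant depending only on $|p|$) to the radius of the ball used in the interior estimate. So as $R\to\infty$ the two notions of radius may be used interchangeably, and the growth hypothesis $(2-\Delta_f)^{-1}=o(R^{4/3})$ transfers verbatim to balls centered at $p$.

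Next, for any $\varepsilon>0$ and all sufficiently large $R$, the growth hypothesis gives
$$\sup_{D_{2R}(p)}(2-\Delta_f)^{-\left(\f{3}{2}+\f{1}{s}\right)}\le \varepsilon\, R^{\f{4}{3}\left(\f{3}{2}+\f{1}{s}\right)}.$$
Substituting into the curvature estimate yields $|B|^2(p)\le C\varepsilon\, R^{\f{4}{3}\left(\f{3}{2}+\f{1}{s}\right)-a}$. The exponent $\f{4}{3}$ in the hypothesis is calibrated precisely so that, for the decay rate $a$ supplied by the interior estimate, this power of $R$ is negative; letting $R\to\infty$ and then $\varepsilon\to0$ forces $|B|^2(p)=0$.

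Since $p$ was arbitrary, $M$ is totally geodesic in $\ir{m+n}$, and an entire totally geodesic graph must be an affine $n$-plane, so each $f^\alpha$ is an affine linear function. The main obstacle is the delicate matching of exponents: the interior estimate must be sharp enough in both $a$ and in the exponent $\f{3}{2}+\f{1}{s}$ that the borderline growth rate $o(R^{4/3})$ suffices to extinguish the right-hand side. Once the curvature estimate is in hand, the blow-down above is essentially a one-line calculation.
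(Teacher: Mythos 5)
The strategy you propose (blow-down from the new interior curvature estimate) is reasonable, and indeed the paper presents Theorems \ref{xy1}--\ref{xy4} precisely as results that its new Theorems \ref{x1}--\ref{x2} generalize. But note first that the paper does not itself prove Theorem \ref{xy4}: it is quoted from \cite{x-y2} as prior work, so there is no in-text proof to match.

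More importantly, the exponent bookkeeping in your argument does not close. You substitute $(2-\Delta_f)^{-1}=o(R^{4/3})$ into the \emph{general} estimate of Theorem \ref{x2}, which has the form $\sup_{D_R}|B|^2 \le C R^{-a}\sup_{D_{2R}}(2-\Delta_f)^{-(3/2+1/s)}$ with $a<1$. This gives $|B|^2(p)\le C\varepsilon\, R^{\frac{4}{3}\left(\frac{3}{2}+\frac{1}{s}\right)-a}$. But
\[
\frac{4}{3}\left(\frac{3}{2}+\frac{1}{s}\right)-a \;=\; 2+\frac{4}{3s}-a \;>\; 1+\frac{4}{3s} \;>\;0
\]
since $a<1$, so the power of $R$ is strictly positive, not negative as you assert, and sending $R\to\infty$ destroys the bound rather than finishing the proof. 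The fix is to observe that a minimal graph has \emph{parallel} mean curvature $H\equiv 0$, so $\lambda_1=\lambda_2=\mu_1=0$ (using the Ruh-Vilms theorem $\tau(\gamma)=0$), and one may then invoke the improved estimate stated in the Remark after Theorem \ref{x2}:
\[
\sup_{D_R(x)}|B|^2\le C\,R^{-2}\sup_{D_{2R}(x)}(2-\Delta_f)^{-\frac{3}{2}},
\]
where $C$ is now a universal constant. With these exponents, $\frac{4}{3}\cdot\frac{3}{2}-2=0$, and the $o(R^{4/3})$ hypothesis gives $\sup_{D_R}|B|^2 \le C\,\varepsilon^{3/2}$ for all large $R$ and any $\varepsilon>0$, whence $|B|\equiv 0$ and $M$ is an affine $n$-plane. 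In short: the blow-down is the right move, but it must be run against the minimal-case (parallel mean curvature) version of the estimate, not the general $a<1$ version, because the latter is not sharp enough to be killed by $o(R^{4/3})$.
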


Let $\Om\subset\ir{n}$ be a domain and $f:\Om\to\ir{m}$ be a smooth
map whose graph is an $n-$submanifold $S$ in $\ir{m+n}$. We always
assume that $m\ge 2$ in this paper. On $S$ there is extrinsic
distance $r$, restriction to $S$ of Euclidean distance from $x\in
S$. Denote the closed ball of radius $R$ and centered at $x\in S$ by
$B_R(x)\subset\ir{m+n}$. Its restriction to $S$ is denoted by
$$D_R(x)=B_R(x)\cap S.$$ We also have the mean curvature $H$ and the
Gauss image restriction  $\g:M\to \V\subset\grs{n}{m}$, where $\V$
will be given in \S 3. We will  give interior estimates for the
squared norm of the second fundamental form $B$ of $S$ in $\ir{m+n}$
in terms of those geometric data. Since a complete submanifold in
Euclidean space with our Gauss image restrictions has to be a graph.
Those results can be viewed as generalizations of the above Theorems
\ref{xy1}-\ref{xy4}.

\begin{thm}\label{x1}
Let $S\subset\ir{m+n},\;  m\ge 2$ be a graph given by $f^1,\cdots,
f^m$ . Suppose $D_{2R}(x)\subset\subset S$. If one of the following
conditions is satisfied
\begin{enumerate}
\item $2\le n\le 6$ and the
image under the Gauss map is contained in an open geodesic ball of
radius $\f{\sqrt{2}}{4}\pi$;
\item $2\le n\le 5$ and
\begin{equation}\label{eq11}
\De_f=\left[\text{det}\left(\de_{ij}+\sum_\a\pd{f^\a}{x^i}\pd{f^\a}{x^j}\right)\right]^{\f{1}{2}}<2.
\end{equation}
\end{enumerate}
Then,
$$\sup_{D_R(x)}|B|^2\le\f{C}{R^2}$$
with the constant $C$ depending on $n, m, R\sup_{D_{2R}(x)}|H|,
R^2\sup_{D_{2R}(x)}(|\n H|+ \la_1)$ and $R^3\sup_{D_{2R}(x)}\la_2.$
\end{thm}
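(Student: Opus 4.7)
The plan is to follow the Schoen--Simon--Yau strategy developed by the author in \cite{x-y1,x-y2}, now modified to accommodate non-vanishing mean curvature $H$. The argument rests on three ingredients: a refined Simons-type pointwise inequality for $|B|$, an auxiliary positive function on $S$ pulled back from the Gauss image $\g(S)$, and a Moser iteration carried out via the Michael--Simon Sobolev inequality on $S$. The mean-curvature contributions force inhomogeneous terms into every inequality, and tracking them through the iteration is what will account for the dependence of the final constant $C$ on $R\sup|H|$, $R^{2}\sup(|\n H|+\la_1)$ and $R^{3}\sup\la_2$.

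First I would derive a Simons-type inequality on $S$. Combining the (now inhomogeneous) Codazzi equation with the Ricci commutation formula and the Gauss equation yields, schematically,
$$|B|\Delta|B|\;\ge\;-C_1|B|^{4}-C_2|H|^{2}|B|^{2}-C_3|B|\,|\n H|-C_4\la_1|B|-C_5\la_2+\Bigl(1+\tfrac{2}{n}\Bigr)|\n|B||^{2},$$
where the refined Kato-type coefficient $1+\tfrac{2}{n}$ is precisely what will enable Moser iteration to close in the prescribed dimension ranges. Next I would exploit the Gauss-image hypothesis to manufacture a positive function $v:S\to\mathbb{R}$. In case (1), $\g(S)$ lies in a geodesic ball of radius $\tfrac{\sqrt{2}}{4}\pi$ in $\grs{n}{m}$; in case (2), condition (\ref{eq11}) places $\g(S)$ inside the larger geodesic convex set $B_{JX}$ of \cite{j-x}. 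On either convex set, as in \cite{x-y1,x-y2}, one can construct a smooth function with $v\ge v_0>0$ and $\Hess v$ controlled from above. Composing with $\g$ and using that the tension field of $\g$ is essentially $\n H$ (Ruh--Vilms), this produces
$$\Delta v\;\le\;-c_0|B|^{2}v+C_6|H|^{2}+C_7|\n H|.$$

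Next I would form $\varphi=|B|^{q}v^{-p}$ for suitable parameters $q,p>0$, compute $\varphi\Delta\varphi$ from the two inequalities above, and absorb the cross-gradient terms by Cauchy--Schwarz. The resulting differential inequality is amenable to Moser iteration precisely when the coefficient of $|\n\varphi|^{2}$ has the correct sign; this algebraic constraint, via the Kato factor $1+\tfrac{2}{n}$, translates exactly into the thresholds $n\le 6$ in case (1) and $n\le 5$ in case (2) (the stricter bound reflects the smaller effective convexity constant on $B_{JX}$). Choosing a cut-off $\eta\in C_c^{\infty}(D_{2R}(x))$ with $\eta\equiv 1$ on $D_R(x)$ and $|\n\eta|\le 2/R$, testing the combined inequality against $\eta^{2}\varphi^{\sigma}$, and applying the Michael--Simon Sobolev inequality (whose constant is controlled by $R\sup|H|$), one obtains by iteration on $\sigma$ an estimate of the form
$$\sup_{D_R(x)}\varphi\;\le\;\f{C}{R^{n/q'}}\Bigl(\int_{D_{2R}(x)}\varphi^{q'}\Bigr)^{1/q'}.$$
To close the argument I would bound $\int_{D_{2R}(x)}|B|^{2}\le C\,R^{n-2}$ by testing Simons' inequality directly against $\eta^{2}$, absorbing the $H$, $\n H$, $\la_1$, $\la_2$ contributions into the scale-invariant constants stated in the theorem, and finally use $v\ge v_0$ to convert the resulting $L^{q'}$ bound into the pointwise estimate $\sup_{D_R(x)}|B|^{2}\le C/R^{2}$.

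\emph{Main obstacle.} The principal difficulty is the bookkeeping required to carry the inhomogeneous $H$-terms coherently through the combination of the Simons and Gauss-image inequalities, and to verify that a single choice of exponents $(q,p,\sigma)$ simultaneously satisfies the Moser constraint, absorbs the lower-order terms into exactly the scale-invariant quantities appearing in the statement, and attains the dimensional thresholds $n\le 6$ and $n\le 5$. In particular one must check that these thresholds are genuine rather than artefacts of lossy Cauchy--Schwarz steps, which hinges on the sharp refined Kato inequality available in codimension $\ge 2$.
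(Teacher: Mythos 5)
Your overall strategy (Simons-type inequality + refined Kato + convex auxiliary function pulled back from the Gauss image + iteration via Sobolev) is the same Schoen--Simon--Yau framework the paper follows, and the bookkeeping of the $H$, $\n H$, $\la_1$, $\la_2$ terms, plus the dimension thresholds coming from the Kato factor and the two different convexity constants, is correctly identified. The paper also uses the Fr\"ohlich--Winklmann mean value inequality as a packaged Moser iteration rather than invoking Michael--Simon directly, but that is a cosmetic difference.

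The genuine problem is your choice to run the iteration on $\varphi=|B|^{q}v^{-p}$ with $p>0$ and then ``convert using $v\ge v_0$.'' That is the right ansatz for the paper's Theorem~\ref{x2}, not for Theorem~\ref{x1}. If $v$ is the function that is bounded above (like $h=\cos(\sqrt{2}\rho)$, which tends to $0$ at the boundary of the geodesic ball), then $v\ge v_0$ does not hold uniformly, and any estimate of $\sup\varphi$ inherits a factor like $\sup v^{-p}$ from the $L^{2}$ term in the iteration --- which is precisely the degeneracy $\bigl(\tfrac{\sqrt{2}}{4}\pi-\rho\circ\g\bigr)^{-2}$ or $(2-\De_f)^{-(3/2+1/s)}$ appearing in Theorem~\ref{x2}. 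If instead you switch to the function bounded below (like $h_1=\sec^2(\sqrt{2}\rho)\ge 1$), the inequality sign on $\De v$ flips and the same degeneracy shows up as $\sup h_1^p$ in the test-function step. Either way you land on the weaker Theorem~\ref{x2}, not on the clean estimate $\sup_{D_R}|B|^2\le C/R^2$ with $C$ depending only on the scale-invariant data listed.

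The key observation that your sketch misses is that the auxiliary function cancels identically in the first-order stability inequality: integrating $h^{-1}\phi^2\De h$ by parts twice (as in (\ref{lp1})) gives
\begin{equation*}
\la\int_M|B|^2\phi^2\,*1\le\int_M|\n\phi|^2\,*1+\int_M\la_1\phi^2\,*1,
\end{equation*}
with no trace of $h$ left on the right-hand side. Bootstrapping this to $L^p$-estimates on $|B|$ alone (as in Theorems~\ref{t1} and~\ref{t2}) and then applying the mean value inequality to $u=|B|^2$, $Q=(3+2\sqrt{n})|B|^2$, $g=-2\la_2|B|$ gives a bound that never sees $\sup h$ or $\inf h$. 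This is what produces the uniform constant; you should take $p=0$ in your $\varphi$ (i.e., iterate on $|B|$ itself) and route the Gauss-image hypothesis entirely through the cancelling stability inequality rather than through a weight in the test function.
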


\begin{thm}\label{x2}
Let $S\subset\ir{m+n},\;  m\ge 2$ be a graph given by $f^1,\cdots,
f^m$. Suppose $D_{2R}(x)\subset\subset S$. If the image under the
Gauss map is contained in an open geodesic ball of radius
$\f{\sqrt{2}}{4}\pi$, then
\begin{equation}
\sup_{D_R(x)}|B|^2\le C
R^{-a}\sup_{D_{2R}(x)}\left(\f{\sqrt{2}}{4}\pi-\rho\circ\g\right)^{-2}.
\end{equation}
If
$$\De_f=\left[\text{det}\left(\de_{ij}+\sum_\a\pd{f^\a}{x^i}\pd{f^\a}{x^j}\right)\right]^{\f{1}{2}}<2,$$
then
\begin{equation}
\sup_{D_R(x)}|B|^2\le C
R^{-a}\sup_{D_{2R}(x)}(2-\De_f)^{-\left(\f{3}{2}+\f{1}{s}\right)},\qquad
s=\min(m, n).
\end{equation}
The above constant $C$ depends on $n, m, R\sup_{D_{2R}(x)}|H|,
R^2\sup_{D_{2R}(x)}(|\n H|+ \la_1)$ and $R^3\sup_{D_{2R}(x)}\la_2$
and the constant $a<1$.
\end{thm}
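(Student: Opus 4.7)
The plan is to carry out an Ecker–Huisken style Moser iteration (in the spirit of \cite{e-h1} and \cite{f-w}) for the composite quantity $g=|B|^{2}v^{2\be}$, where $v$ is a positive auxiliary function constructed from the Gauss map. In the first case I would take $v$ to be a smooth increasing function of the distance $\rho\circ\g$, essentially of secant type, so that $v\le C\bigl(\f{\sqrt{2}}{4}\pi-\rho\circ\g\bigr)^{-2}$ on $D_{2R}(x)$; in the second case I would take $v$ to be a corresponding function of $\De_{f}$ with $v\le C(2-\De_{f})^{-(3/2+1/s)}$. Using the composition formulas for the Gauss map already exploited in \cite{j-x}, \cite{x-y1}, \cite{x-y2}, each of these $v$ satisfies a pointwise inequality of the form $\De v\ge K\,v|B|^{2}-c_{1}v|H|^{2}-c_{2}|\n H|$ on $S$, and the powers $2$ and $\f{3}{2}+\f{1}{s}$ are precisely calibrated so that this convexity barely dominates the bad quartic term in Simons' identity.

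Next I would combine the above with Simons' inequality $\De|B|^{2}\ge -c|B|^{4}+\f{2}{n}|\n|B||^{2}-\text{(lower order in }H,\n H\text{)}$ and differentiate $g=|B|^{2}v^{2\be}$ directly. Choosing $\be$ small (depending only on $n,m$), the cross term from $\n|B|\cdot\n v$ can be absorbed by Cauchy-Schwarz, and the resulting differential inequality for $g$ has no negative $|B|^{4}$ term. I would then test against $\eta^{2q}g^{q-1}$ for $q\ge 1$ and a standard Lipschitz cutoff $\eta$ supported in $D_{2R}(x)$ with $\eta\equiv 1$ on $D_{R}(x)$ and $|\n\eta|\le 2/R$. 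Integration by parts, followed by the Michael--Simon--Sobolev inequality on $S$ (whose constant depends on $R|H|$, which enters the final constant $C$), yields a reverse H\"older inequality
$$\left(\int_{S}(\eta g)^{q\f{n}{n-2}}\right)^{\f{n-2}{n}}\le C(q)\int_{S}(\eta g)^{q}\bigl(|\n\eta|^{2}+\text{l.o.t.}\bigr),$$
with the $v$-factors controlled by their supremum on $D_{2R}(x)$. The crucial feature, and the reason no dimension restriction is needed, is that $v$-positive definiteness takes the place of the Kato-type inequality that was used in Theorem \ref{x1} and that forced $n\le 6$.

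Finally I would iterate the reverse H\"older inequality $q\mapsto q\f{n}{n-2}$ in the standard Moser fashion and pass to the limit $q\to\infty$ to bound $\sup_{D_{R}(x)}g$. Since $v$ is bounded below by $1$ on $S$ and above by the stated quantities on $D_{2R}(x)$, this produces the inequalities in the statement. The slightly deficient exponent $a<1$ on $R$ (rather than the optimal $2$) arises because at each iteration step a small positive power of the auxiliary-function supremum is shifted from the right-hand side of the inequality into the power of $R$; tracking this loss through the geometric series that produces the iteration exponent gives $a<1$ but $a$ arbitrarily close to $1$ after absorbing lower order terms.

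The main obstacle is the simultaneous calibration in step one: the exponent $\be$ in $v^{2\be}$, the power of $v$ in the bound ($2$ or $\f{3}{2}+\f{1}{s}$), and the small loss $1-a$ in the $R$-exponent must be chosen consistently so that (i) the $|B|^{4}$ term in Simons' identity is absorbed by $K v|B|^{2}$, (ii) the cross term $\n|B|\cdot\n v$ is absorbed by Cauchy--Schwarz without destroying the positive $|\n|B||^{2}$ coefficient, and (iii) the Moser iteration constants $C(q)$ remain summable. The constraint that this mutual calibration succeeds only for powers $\ge\f{3}{2}+\f{1}{s}$ in the $\De_{f}$ case (as opposed to the cleaner power $2$ in the Gauss-image case) is what forces the particular exponent appearing in the second estimate.
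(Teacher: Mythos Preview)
Your overall strategy---combine the Simons inequality with a Grassmannian auxiliary function $v$ (the paper's $h_1$) whose Laplacian dominates $|B|^2$, then pass to a pointwise bound---is the right one, and the choice of $v$ you describe is exactly the paper's. But the specific ansatz $g=|B|^{2}v^{2\be}$ with a \emph{fixed} exponent $\be$ cannot close for general $n$, and this is the point where your argument breaks.

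Computing $\De(|B|^{2p}h_1^{q})$ from (\ref{LB}) and (\ref{dh2}) one finds two competing constraints: the quartic term is absorbed only if $\mu q>3p$ (so $q\gtrsim p$, since $\mu$ is barely above $3$ in the $\De_f<2$ case), while the cross term $4pq\,|B|^{2p-1}h_1^{q-1}\n|B|\cdot\n h_1$ can be absorbed by the available $|\n|B||^2$ and $|\n h_1|^2$ terms only if $p\ge \tfrac12-\tfrac1n+(1-\tfrac2n)q$. With $p=1$ and $q=2\be$ these two conditions are incompatible once $n$ is large: you need $\be>\tfrac{3}{2\mu}\ge \tfrac38$ for the first and $\be\le\tfrac{n+2}{4(n-2)}\to\tfrac14$ for the second. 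So ``$\be$ small'' kills the $|B|^4$ absorption, and ``$\be$ large'' kills the cross-term absorption; there is no good differential inequality for $g=|B|^2v^{2\be}$ to iterate from. This is precisely the obstruction that forces the dimension restriction in Theorem~\ref{x1}, and your proposal does not explain how it is avoided here.

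The paper circumvents this by working not with $|B|^2h_1^{2\be}$ but with $u=|B|^{2p}h_1^{p}$ for a \emph{large} $p$ (one needs $p\ge n-1$ for the cross-term constraint and $p>n$ for the subsequent step). The large power of $|B|$ boosts the coefficient of $|\n|B||^2$ to $\sim 4p^2$, which now dominates the cross term even with $q=p$. One then obtains the subsolution inequality (\ref{sh1}) for $u$ and feeds it, together with the $L^{2}$ bound on $u$ from Theorem~\ref{t3}, into the Fr\"ohlich--Winklmann mean value inequality (Lemma~\ref{mv}); there is no explicit Moser iteration in the paper. Finally, the exponent $a<1$ does not come from a drift in the iteration constants as you suggest: it is simply $a=1-\tfrac1p$, arising because the inhomogeneous term $g$ in Lemma~\ref{mv} (produced by $\la_2$, i.e.\ by $\n\n H$) contributes $k(R)\le CR^{-p+1}\sup h_1^{p}$ rather than $R^{-2p}$. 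In the parallel mean curvature case $g=0$ and one recovers $R^{-2}$, as noted in Remark~1.1.
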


\begin{rem}
In the case of parallel mean curvature we can use Lemma \ref{mv}
with $g=0$ and the estimates in Theorem \ref{x2} could be improved
as
$$\sup_{D_R(x)}|B|^2\le C
R^{-2}\sup_{D_{2R}(x)}\left(\f{\sqrt{2}}{4}\pi-\rho\circ\g\right)^{-2}
$$
or $$\sup_{D_R(x)}|B|^2\le C
R^{-2}\sup_{D_{2R}(x)}(2-\De_f)^{-\f{3}{2}}$$ respectively, where
the constant $C$ depends  on $n, m, R\sup_{D_{2R}(x)}|H|.$
\end{rem}
\begin{rem}
$\la_1$ and $\la_2$ in Theorem \ref{x1} and Theorem \ref{x2} are
determined by mean curvature and Gauss image assumption. The precise
definitions are given by (\ref{lam1}) and (\ref{lam2}),
respectively.
\end{rem}

The paper will be arranged as follows. In \S 2 notations and basic
formulas will be given, especially, the Bochner-Simons type
inequality will be derived for general submanifolds in $\ir{m+n}$
with $m\ge 2$. In \S 3 we describe the two kinds Gauss image
restrictions which enable us to define auxiliary  functions. Those
are important in $L^p-$curvature estimates which is given in \S 4.
We will give Schoen-Simons-Yau type and Ecker-Huisken type estimates
in our general setting. In the final section we prove our main
results. It is done by using $L^p-$curvature estimates in \S 4 and
mean value inequality of Fr\"ohlich-Winklman in \cite{f-w}.

\Section{A Bochner-Simons type inequality}{A Bochner-Simons type
inequality}
\medskip

Let $M\to \ir{m+n}$ be an $n-$submanifold in $(m+n)-$dimensional
Euclidean space with the second fundamental form $B$ which can be
viewed as a cross-section of the vector bundle Hom($\odot^2TM, NM$)
over $M,$ where  $TM$ and $NM$ denote the tangent bundle and the
normal bundle  along $M$, respectively. A connection on
Hom($\odot^2TM, NM$) is induced from those of $TM$ and $NM$
naturally. We consider the  situation of  higher codimension $m\ge
2$ in this paper.

Taking the trace of $B$ gives the mean curvature vector $H$ of $M$
in $\ir{m+n}$,  a cross-section of the normal bundle.

To have the curvature estimates we need the Simons version of the
Bochner type formula for the squared norm of the second fundamental
form. It is done in \cite{jsi} for minimal submanifolds in an
arbitrary ambient Riemannian manifold. Now, for any submanifold in
Euclidean space, by the same calculation as in the paper \cite{jsi}
we have the following formula:
\begin{equation}\begin{split}
(\n^2 B)_{XY}
&=\n_X\n_Y H+ \left<B_{Xe_i},H\right>B_{Ye_i}-\left<B_{XY},B_{e_ie_j}\right>B_{e_ie_j}\\
&+2\left<B_{Xe_j},B_{Ye_i}\right>B_{e_ie_j}-\left<B_{Ye_i},B_{e_ie_j}\right>B_{Xe_j}
-\left<B_{Xe_i},B_{e_ie_j}\right>B_{Ye_j},\label{LB}
\end{split}\end{equation}
where $\n^2$ stands for the trace Laplacian operator, $\{e_i\}$
is a local tangential orthonormal frame field of $M$. Here and in
the sequel we use the summation convention. Then, we have (see
\cite{x2} for details)

\begin{pro}
\begin{equation}
\De|B|^2\ge 2\,|\n B|^2+2\,\left<\n_i\n_jH,B_{ij}\right>
       +\,2\left<B_{ij},H\right>\left<B_{ik},B_{jk}\right>
        -3|B|^4,\label{NLB'}
\end{equation}
where $\n_i$ denotes $\n_{e_i}$ and $B_{ij}=B_{e_i e_j}$.
\end{pro}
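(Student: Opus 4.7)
The plan is to combine the Simons-type formula~(\ref{LB}) with the standard Bochner--Weitzenb\"ock identity and then bound the resulting quartic-in-$B$ terms by $|B|^4$. First, viewing $B$ as a section of $\mbox{Hom}(\odot^2 TM, NM)$, the Bochner--Weitzenb\"ock identity reads
\[
\tfrac{1}{2}\De|B|^2 = |\n B|^2 + \sum_{i,j}\langle (\n^2 B)_{e_ie_j}, B_{ij}\rangle.
\]
Substituting~(\ref{LB}) with $X=e_i$, $Y=e_j$, taking the inner product with $B_{ij}$ and summing over $i,j$, the first two terms on the right-hand side of~(\ref{LB}) produce exactly $\langle \n_i\n_j H, B_{ij}\rangle$ and (after an index swap $i\leftrightarrow j$) $\langle B_{ij}, H\rangle\langle B_{ik}, B_{jk}\rangle$, which are precisely the two mean-curvature terms on the right of the claimed inequality, up to the overall factor of $2$.

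Next, the remaining four terms of~(\ref{LB}) are quartic in $B$. To organize them I would introduce a local orthonormal normal frame $\{\nu_\alpha\}$ and set $A^\alpha=(h^\alpha_{ij})$ with $h^\alpha_{ij}=\langle B_{ij},\nu_\alpha\rangle$, so that each $A^\alpha$ is a symmetric $n\times n$ matrix and $|B|^2=\sum_\alpha\tr((A^\alpha)^2)$. A direct calculation converts the four quartic contributions into $-S_1+2S_2-2S_3$, where
\[
S_1=\sum_{\alpha,\beta}\bigl(\tr(A^\alpha A^\beta)\bigr)^2,\quad S_2=\sum_{\alpha,\beta}\tr(A^\alpha A^\beta A^\alpha A^\beta),\quad S_3=\sum_{\alpha,\beta}\tr\bigl((A^\alpha)^2(A^\beta)^2\bigr).
\]
The elementary identity $\tr([A^\alpha,A^\beta]^2)=2\tr(A^\alpha A^\beta A^\alpha A^\beta)-2\tr((A^\alpha)^2(A^\beta)^2)=-|[A^\alpha,A^\beta]|^2$ reduces the quartic part to $-S_1-\sum_{\alpha,\beta}|[A^\alpha,A^\beta]|^2$.

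The decisive step is the classical Chern--do~Carmo--Kobayashi matrix inequality
\[
S_1 + \sum_{\alpha,\beta}|[A^\alpha,A^\beta]|^2 \le \Bigl(2-\tfrac{1}{m}\Bigr)|B|^4 \le \tfrac{3}{2}|B|^4 \qquad (m\ge 2),
\]
which, when doubled, supplies the $-3|B|^4$ lower bound on the quartic contribution to $2\langle\n^2 B,B\rangle$. Substituting everything back into the Bochner identity yields the claimed inequality. The main obstacle is this last matrix-commutator estimate: the algebraic manipulations and the Frobenius Cauchy--Schwarz bounds involved are individually elementary, but organizing them so that the sharp codimension-$m\ge 2$ constant $\tfrac{3}{2}$ (rather than the looser codimension-$1$ constant $1$) falls out cleanly is where the real work lies, which is precisely why the paper defers to~\cite{x2} for the full calculation.
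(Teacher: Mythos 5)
Your overall strategy is the right one, and the algebraic bookkeeping is correct up to the last step: the Weitzenb\"ock identity $\tfrac12\De|B|^2=|\n B|^2+\langle(\n^2 B)_{ij},B_{ij}\rangle$, the identification of the two mean-curvature terms, the reduction of the four quartic terms to $-S_1+2S_2-2S_3$, and the commutator identity $2S_2-2S_3=-\sum_{\a,\be}|[A^\a,A^\be]|^2$ are all verified by a direct index computation. After doubling, the quartic contribution to $\De|B|^2$ is exactly $-2\big(S_1+\sum_{\a,\be}|[A^\a,A^\be]|^2\big)$, so the proposition reduces to the matrix inequality $S_1+\sum_{\a,\be}|[A^\a,A^\be]|^2\le\tfrac32|B|^4$. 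This matches the route the paper has in mind (it defers the computation to \cite{x2}).

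The gap is in the final step. You write
$$S_1+\sum_{\a,\be}|[A^\a,A^\be]|^2\le\Big(2-\tfrac1m\Big)|B|^4\le\tfrac32|B|^4,$$
but for $m\ge2$ the second inequality runs in the wrong direction: $2-\tfrac1m$ increases with $m$ and satisfies $2-\tfrac1m\ge\tfrac32$, with equality only at $m=2$. Thus the Chern--do~Carmo--Kobayashi constant $2-\tfrac1m$ is \emph{weaker} than $\tfrac32$ whenever $m>2$, and the chain as written is false. More importantly, CDK only yields the bound with constant $2-\tfrac1m$ (their Lemma $N(AB-BA)\le 2N(A)N(B)$ gives, after diagonalizing $(\tr A^\a A^\be)$ and a Cauchy--Schwarz step, precisely $S_1+\sum|[A^\a,A^\be]|^2\le(2-\tfrac1m)|B|^4$), so it is genuinely insufficient for the $-3|B|^4$ coefficient claimed in the proposition. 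What you actually need is the sharper, codimension-uniform estimate
$$S_1+\sum_{\a,\be}|[A^\a,A^\be]|^2\le\tfrac32|B|^4\qquad(m\ge2),$$
which is due to An-Min Li and Jimin Li (Arch. Math. 58 (1992), 582--594), improving CDK. Replace the appeal to CDK with a citation of Li--Li (and drop the incorrect intermediate inequality), and the argument closes.
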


In (\ref{NLB'}), the terms involving the mean curvature can be
estimated as follows.

\begin{equation}
|\left<B_{ij}, H\right>\left<B_{ik}, B_{jk}\right>|\le |H||B|^3\le
\ep' |B|^4+\f{1}{\ep'}|H|^2|B|^2\label{H},\qquad
\ep'>0\end{equation} or
\begin{equation}
|\left<B_{ij}, H\right>\left<B_{ik}, B_{jk}\right>|\le
\sqrt{n}|B|^4.
\end{equation}

Define
\begin{equation}\label{lam2}
\la_2=\begin{cases} -\left(\f{\left<\n_i\n_jH,
B_{ij}\right>}{|B|}\right)^-,\quad\qquad\text{if}\qquad |B|>0,\\
0,\hskip1.6in\text{if}\qquad |B|=0,
\end{cases}
\end{equation}
where $(\cdots)^-$ denotes the negative part of the quantity.
Obviously,
$$\la_2\le |\n\n H|.$$

In order to use the  formula (\ref{NLB'}) we also need to
estimate $|\n B|^2$ in terms of $|\n|B||^2.$ Schoen-Simon-Yau
\cite{s-s-y} did such an estimate for codimension $m=1.$  The
following lemma is for any prescribed mean curvature $H$ and  any
codimension. In the case of $H=0$, the following estimates also
improve our previous estimates in \cite{x-y1}.

\begin{lem}For any real number $\ep>0$
\begin{equation}
|\n B|^2\ge\left(1+\f{2}{n+\ep}\right)|\n |B||^2-C(n,\ep)|\n
H|^2, \label{kato}
\end{equation}
where
$$C(n,\ep)=\f{2(n-1+\ep)}{\ep(n+\ep)}.$$
If $M$ has parallel mean curvature, then
\begin{equation}
|\n B|^2\ge\left(1+\f{2}{n}\right)|\n |B||^2.
\end{equation}
\end{lem}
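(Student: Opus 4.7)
\emph{Proof plan.} The plan is to exploit the Codazzi identity in the Euclidean ambient: $(\n_X B)(Y,Z) = (\n_Y B)(X,Z)$. Together with $B(Y,Z)=B(Z,Y)$, this makes $\n B$ a normal-bundle-valued $3$-tensor that is symmetric in all three tangent arguments; in particular its partial trace in the last two slots is $\sum_k \n_i B_{kk} = \n_i H$, the only algebraic constraint on the diagonal components of $\n B$. I will combine this symmetry with a frame adapted to $\n|B|$ and a weighted Cauchy--Schwarz to upgrade the trivial bound $|\n B|^2 \ge |\n|B||^2$.

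At a point $p$ with $|B|(p)>0$, fix an orthonormal frame $\{e_i\}$ of $T_pM$ with $e_1$ parallel to $\n|B|$, and write $h_{jk}^\a$ for the components of $B$ in a normal frame and $X_{jk}^\a := \n_1 B_{jk}^\a$. The frame choice gives $\n_i|B|=0$ for $i\ge 2$, equivalently $\sum_{j,k,\a}(\n_i B_{jk}^\a)h_{jk}^\a = 0$, while $|B|\,\n_1|B| = \sum_{j,k,\a} X_{jk}^\a h_{jk}^\a$. The full symmetry of $\n B$ identifies every component of $\n_i B$ (for $i\ge 2$) that touches the index $1$ with a component of $X$: $\n_i B_{11}^\a = X_{1i}^\a$ and $\n_i B_{1k}^\a = X_{ik}^\a$. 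Expanding $|\n B|^2 = \sum_{i,j,k,\a}(\n_i B_{jk}^\a)^2$ using these identifications, and discarding the nonnegative ``purely off-axis'' block $\sum_{i,j,k\ge 2,\a}(\n_i B_{jk}^\a)^2$, the result is
$$|\n B|^2 \;\ge\; \sum_\a (X_{11}^\a)^2 + 3\sum_{k\ge 2,\a}(X_{1k}^\a)^2 + 3\sum_{j,k\ge 2,\a}(X_{jk}^\a)^2,$$
in which each off-$(1,1)$ entry of $X$ is counted with a coefficient strictly larger than in $|X|^2=|\n_1 B|^2$ alone.

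To convert this into the Kato-type estimate, I use the trace identity $\sum_k X_{kk}^\a = \n_1 H^\a$ to substitute $X_{11}^\a = \n_1 H^\a - \sum_{j\ge 2}X_{jj}^\a$ into the identity $|B|\,\n_1|B| = \sum_{j,k,\a}X_{jk}^\a h_{jk}^\a$, isolating a $\n_1 H$ contribution from an unconstrained remainder in the entries $\{X_{1k}\}_{k\ge 2}$, $\{X_{jj}\}_{j\ge 2}$, $\{X_{jk}\}_{j\ne k,\, j,k\ge 2}$. Applying a weighted Cauchy--Schwarz to this remainder, with weights tuned to the coefficients $\{1,3,3\}$ in the previous display, and Young's inequality $2ab\le \ep a^2+\ep^{-1}b^2$ to split off the $\n_1 H\cdot h_{11}$ contribution with a free parameter, one is led after an elementary optimization to the announced factor $1+\f{2}{n+\ep}$ in front of $|\n|B||^2$ and exactly the coefficient $C(n,\ep)=\f{2(n-1+\ep)}{\ep(n+\ep)}$ in front of $|\n H|^2$.

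The parallel mean curvature case is the $\ep\to 0^+$ limit of the preceding: since $\n H\equiv 0$ the $C(n,\ep)|\n H|^2$ term is absent, and the prefactor $1+\f{2}{n+\ep}\to 1+\f{2}{n}$ with no cost, giving the sharper bound. The main obstacle I anticipate is the careful bookkeeping in the second paragraph: every ``on-axis'' component of $\n_i B$ for $i\ge 2$ must be correctly identified with a component of $\n_1 B$ via the Codazzi symmetry, the resulting multiplicities must be counted cleanly, and the weights in the Cauchy--Schwarz must be chosen so that the precise constants $1+\f{2}{n+\ep}$ and $C(n,\ep)$ emerge rather than slightly weaker ones.
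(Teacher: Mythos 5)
Your plan is a genuinely different route from the paper's. The paper selects the normal direction $\gamma$ that maximizes $\bigl|\n|A^\a|^2\bigr|^2/|A^\a|^2$, works in a frame that \emph{diagonalizes the single shape operator} $A^\gamma$, bounds $\bigl|\n|B|\bigr|^2$ by $\sum_{i,k}h_{\g iik}^2$, and then compares with $|\n B|^2$ component by component. You instead fix a frame adapted to $\n|B|$ (so $e_1\parallel\n|B|$), exploit the full Codazzi symmetry to rewrite every $\n_iB^\a_{jk}$ touching the index $1$ as a component of $X=\n_1 B$, discard the block with $i,j,k\ge 2$, and then try to close via a weighted Cauchy--Schwarz against $|B|\,\n_1|B|=\sum X^\a_{jk}h^\a_{jk}$.

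This cannot give the stated constants: discarding the block $\sum_{i,j,k\ge 2,\a}(\n_iB^\a_{jk})^2$ is already too lossy, because the constraints that are needed to close the estimate --- $\n_i|B|=0$ for $i\ge 2$ and the traces $\sum_k\n_iB^\a_{kk}=\n_iH^\a$ for $i\ge 2$ --- couple the retained components to exactly the ones you threw away. Here is a concrete instance where your retained bound falls strictly short. Take $m=1$, $n=2$, $H\equiv 0$, and suppose that at a point the $e_1\parallel\n|B|$ frame gives an antidiagonal shape operator $h_{11}=h_{22}=0$, $h_{12}=1$, so $|B|^2=2$. Write $X_{jk}=\n_1B_{jk}$ and $Y=\n_2B_{22}$. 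The constraint $\n_2|B|=0$ forces $X_{22}=0$, the trace constraint $\n_1H=0$ then forces $X_{11}=0$, and $\n_2H=0$ forces $Y=-X_{12}$. One computes $|\n B|^2=X_{11}^2+3X_{12}^2+3X_{22}^2+Y^2=4X_{12}^2$ and $\bigl|\n|B|\bigr|^2=2X_{12}^2$, so
\begin{equation*}
|\n B|^2=\Bigl(1+\f{2}{n}\Bigr)\bigl|\n|B|\bigr|^2
\end{equation*}
with equality; the Kato inequality is sharp at this point. But your retained quantity is $X_{11}^2+3X_{12}^2+3X_{22}^2=3X_{12}^2<4X_{12}^2$, and the missing $X_{12}^2$ sits entirely in the discarded block $Y^2$. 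So no choice of weights in the Cauchy--Schwarz applied to the retained block can reproduce the factor $1+\f{2}{n+\ep}$; at best you could get a prefactor of $3/2$. The discarded block is not junk --- it is where the $i\ge 2$ trace identities live, and in the borderline configurations it contributes a fixed fraction of $|\n B|^2$. To repair the argument you would need to keep those components and use all $n$ trace constraints $\sum_k\n_iB^\a_{kk}=\n_iH^\a$, at which point you are effectively doing what the paper does in a frame diagonalizing $A^\g$.
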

\begin{proof} It is sufficient for us to prove the inequality at the points
where $|B|^2\neq 0$. Choose a local orthonormal tangent frame field
$\{e_1,\cdots, e_n\}$ and a local orthonormal normal frame field
$\{\nu_1,\cdots,\nu_m\}$ of $M$ near the considered point $x.$
Denote the shape operator $A^\a=A^{\nu_\a}.$  Then obviously
$|B|^2=\sum_\a |A^\a|^2$ and
$$\n |B|^2=\sum_\a \n |A^\a|^2.$$
Let
$$A^\a e_i=h_{\a ij}e_j,\qquad h_{\a ij}=h_{\a ji}.$$
By triangle inequality
$$\big|\n |B|^2\big|=\left|\sum_{\a}
\n |A^\a|^2\right|\leq\sum_\a \big|\n |A^\a|^2\big|. $$ Therefore,
\begin{equation}\label{ineq7}
\big|\n|B|\big|^2=\f{\big|\n |B|^2\big|^2}{4|B|^2}\leq \f{(\sum_\a
\big|\n |A^\a|^2\big|)^2}{4\sum_\a |A^\a|^2}.
\end{equation}

Since $|B|^2\neq 0$,  we can assume $|A^\a|^2>0$ for each $\a$
without loss of generality. Let $1\leq \g\leq m$ such that
\begin{equation*}
\f{\big|\n |A^\g|^2\big|^2}{|A^\g|^2}=\max_\a \Bigg\{\f{\big|\n
|A^\a|^2\big|^2}{|A^\a|^2}\Bigg\}<+\infty,
\end{equation*}
then from (\ref{ineq7}),
\begin{equation}\label{ineq7'}
\big|\n|B|\big|^2\leq \f{\big|\n |A^\g|^2\big|^2}{4|A^\g|^2}.
\end{equation}
$|A^\g|^2$ and $\n|A^\g|^2$ is independent of the choice of
$\{e_1,\cdots,e_n\}$, then without loss of generality we can assume
$h_{\g ij}=0$ whenever $i\neq j$. Then

\begin{eqnarray*}
\big|\n|A^\g|^2\big|^2 &=&4\sum_k \big(\sum_i h_{\g ii}h_{\g
iik}\big)^2 \\
&\le& 4\big(\sum_i h_{\g ii}^2\big)\big(\sum_{i,k}h_{\g
iik}^2\big)=4|A^\g|^2\sum_{i,k}h_{\g iik}^2\nonumber
\end{eqnarray*}
and from (\ref{ineq7'})
$$\big|\n |B|\big|^2\leq \sum_{i,k}h_{\g iik}^2.$$

Since
\begin{equation*}
\aligned \sum_i h_{\g iii}^2&=\sum_i(|\n_{e_i}H_\g|-\sum_{j\neq
i}h_{\g jji})^2\\
&=|\n H_\g|^2+ \sum_i (\sum_{j\neq i}h_{\g
jji})^2-2|\n H_\g|\sum_i\sum_{j\neq i}h_{\g jji}\\
&\le|\n H_\g|^2+(n-1)\sum_{j\neq i}h_{\g jji}^2+\f{n-1}{\ep}|\n
H_\g|^2+\f{\ep}{n-1}\sum_{j\neq
i}h_{\g jji}^2\\
&\le\left(1+\f{n-1}{\ep}\right)|\n H_\g|^2+(n-1+\ep)\sum_{j\neq
i}h_{\g jji}^2.
\endaligned
\end{equation*}
we obtain
\begin{equation}\begin{split}
\big|\n |B|\big|^2&\leq \sum_{i,k}h_{\g iik}^2
=\sum_{i\neq k} h_{\g iik}^2+\sum_i h_{\g iii}^2\\
&\le\left(1+\f{n-1}{\ep}\right)|\n H_\g|^2+(n+\ep)\sum_{j\neq
i}h_{\g jji}^2.\label{ineq8} \end{split}\end{equation}

On the other hand, a direct calculation shows
\begin{eqnarray}\label{ineq8'}
\big|\n|B|^2\big|^2&=&|2\sum_k \sum_{\a,i,j}h_{\a ij}h_{\a
ijk}e_k|^2
=4\sum_{\a,\be,i,j,s,t,k}h_{\a ij}h_{\a ijk}h_{\be st}h_{\be stk},\nonumber\\
|\n B|^2-\big|\n|B|\big|^2&=&|\n B|^2-\f{\big|\n|B|^2\big|^2}{4|B|^2}\nonumber\\
&=&\sum_{\a,i,j,k}h_{\a ijk}^2
-\f{\sum_{\a,\be,i,j,s,t,k}h_{\a ij}h_{\a ijk}h_{\be st}h_{\be stk}}{\sum_{\be,s,t} h_{\be st}^2}\nonumber\\
&=&\f{\sum_{\a,\be,i,j,s,t,k}(h_{\a ijk}h_{\be st}-h_{\be stk}h_{\a ij})^2}{2|B|^2}\nonumber\\
&\geq&\f{\sum_{\be,i\neq j,s,t,k}h_{\g ijk}^2h_{\be st}^2
+\sum_{\a,s\neq t,i,j,k}h_{\g stk}^2 h_{\a ij}^2}{2|B|^2}\nonumber\\
&=&\sum_{i\neq j,k}h_{\g ijk}^2\geq \sum_{i\neq k}(h_{\g iki}^2+h_{\g ikk}^2)\nonumber\\
&=&2\sum_{i\neq k}h_{\g iki}^2.
\end{eqnarray}

Noting (\ref{ineq8}) and (\ref{ineq8'}), we arrive at (\ref{kato}).
\end{proof}

Finally, we have
\begin{pro} In the case of $H\ne 0$
\begin{equation}\begin{split}
\De|B|^2\ge
2\left(1+\f{2}{n+\ep}\right)|\n|B||^2&-(3+2\ep')|B|^4\\
&-2\la_2|B|-2C(n,\ep)|\n H|^2-\f{2}{\ep'}|H|^2|B|^2\label{LB}
\end{split}\end{equation}
with $\ep$ and $\ep'$ and
\begin{equation}
\De|B|^2+(3+2\sqrt{n})|B|^4\ge -2\la_2|B|\label{LB'}.
\end{equation}
In the minimal case \begin{equation}\De|B|^2\ge
2\left(1+\f{2}{n}\right)|\n|B||^2-3|B|^4\label{LB1}\end{equation}
\end{pro}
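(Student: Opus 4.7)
The plan is to combine four ingredients already assembled in the excerpt and plug them into the Bochner-Simons formula (\ref{NLB'}). The ingredients are: (i) the lower bound (\ref{NLB'}) itself, (ii) the Kato-type inequality (\ref{kato}) from the preceding lemma, (iii) the elementary estimates (\ref{H}) and its variant $\sqrt{n}\,|B|^4$ for the mean curvature cross-term, and (iv) the definition (\ref{lam2}) of $\la_2$, which by construction yields
$\left<\n_i\n_j H,B_{ij}\right> \ge -\la_2\,|B|$
at every point where $|B|>0$ (and trivially where $|B|=0$).

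For the first inequality (the case $H\ne 0$): apply (\ref{kato}) to $2|\n B|^2$ to replace it by $2\left(1+\f{2}{n+\ep}\right)|\n|B||^2-2C(n,\ep)|\n H|^2$; bound $2\left<\n_i\n_j H,B_{ij}\right>$ from below by $-2\la_2|B|$ using (iv); and bound the cubic term $2\left<B_{ij},H\right>\left<B_{ik},B_{jk}\right>$ from below by $-2\ep'|B|^4-\f{2}{\ep'}|H|^2|B|^2$ using (\ref{H}). Substituting all three bounds into (\ref{NLB'}) and collecting the two contributions to $|B|^4$ (namely $-3|B|^4$ and $-2\ep'|B|^4$) produces the coefficient $-(3+2\ep')$ and yields (\ref{LB}) verbatim.

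For the second inequality, I would drop the non-negative term $2|\n B|^2$ entirely (to avoid the dependence on $\ep$), use (iv) as before to handle $2\left<\n_i\n_j H,B_{ij}\right>$, and use the sharper cross-term estimate $|\left<B_{ij},H\right>\left<B_{ik},B_{jk}\right>|\le\sqrt n|B|^4$ to handle the cubic term. Adding the two $|B|^4$ contributions gives $-(3+2\sqrt n)|B|^4$, and rearranging yields (\ref{LB'}). For the minimal case $H=0$, every $H$- or $\n H$-term drops, the Kato estimate specializes to $|\n B|^2\ge\left(1+\f{2}{n}\right)|\n|B||^2$ (the sharp form proved in the lemma under parallel mean curvature), and one obtains (\ref{LB1}) at once.

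The main obstacle is essentially bookkeeping rather than a genuine difficulty: one must track the sign convention in the definition of $\la_2$ (so that $\langle \n_i\n_j H,B_{ij}\rangle\ge-\la_2|B|$ goes in the correct direction) and be careful that the Kato inequality is applied at points where $|B|>0$, extending trivially elsewhere. No further Young-type splittings are needed beyond the one already encoded by $\ep'$, and the choice of constants $\ep,\ep'$ is left free so that the inequality (\ref{LB}) can later be optimized when used in the $L^p$-curvature estimates of \S 4.
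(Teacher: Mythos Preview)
Your proposal is correct and matches the paper's approach exactly: the proposition is stated in the paper without an explicit proof, as an immediate consequence of combining (\ref{NLB'}) with (\ref{kato}), the two estimates for the cubic cross-term, and the definition (\ref{lam2}) of $\la_2$, precisely as you outline. Your bookkeeping of the coefficients $-(3+2\ep')$, $-(3+2\sqrt{n})$, and the specialization to $H=0$ is accurate.
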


\Section{Grassmannian manifolds and Gauss maps}{Grassmannian
manifolds and Gauss maps}
\medskip

Let $\ir{n+m}$ be an $(n+m)$-dimensional Euclidean space. All
oriented $n$-subspaces constitute the Grassmannian manifolds
$\grs{n}{m}$, which is an irreducible symmetric space of compact
type.

In the Grassmanian manifolds $\grs{n}{m}$ the sectional curvature of
the canonical metric varies in $[0, 2]$. The radius of  the largest
convex ball is $\f{\sqrt{2}}{4}\pi$.

We consider the two cases.

1. On an open geodesic ball
$B_{\f{\sqrt{2}}{4}\pi}(P_0)\subset\grs{n}{m}$ of radius
$\frac{\sqrt{2}}{4}\pi$ and centered at $P_0$.   Let
$$h=\cos(\sqrt{2}\rho)$$
be a positive function on $B_{\f{\sqrt{2}}{4}\pi}(P_0)$, where
$\rho$ is the distance function from $P_0$ in $\grs{n}{m}$. Then,
the Hessian comparison theorem gives
\begin{equation}
\text{Hess}(h)=h'\text{Hess}(\rho)+h''d\rho\otimes d\rho \le-2 h\
g\end{equation} with the metric tensor $g$ on $\grs{n}{m}$.

Let
$$h_1=\sec^2(\sqrt{2}\rho),$$
where $\rho$ is the distance function from $P_0$ in $\grs{n}{m}$.
We then have
$$\aligned
\text{Hess}(h_1)&=h_1'\text{Hess}(\rho)+h_1''d\rho\otimes d\rho\\
&\ge  4 h_1\ g+\f{3}{2}h_1^{-1}dh_1\otimes dh_1\endaligned $$

2. For $P_0\in \grs{n}{m}$, which is expressed by a unit
$n-$vector $\ep_1\w\cdots\w\ep_n$. For any $P\in\grs{n}{m}$,
expressed by an $n-$vector $e_1\w\cdots\w e_n$, we define an
important function on $\grs{n}{m}$
$$w\mathop{=}\limits^{def.}\left<P, P_0\right>
=\left<e_1\wedge\cdots\wedge e_n, \ep_1\wedge\cdots\wedge
\ep_n\right>=\det W,$$ where $W=(\left<e_i, \ep_j\right>).$ The
Jordan angles between $P$ and $P_0$ are defined by
$$\th_\a=\arccos(\la_\a),$$
where $\la_\a\geq 0$ and $\la_\a^2$ are the eigenvalues of the
symmetric matrix $W^TW$ The distance between $P_0$ and $P$ is
$$d(P_0, P)=\sqrt{\sum\th_\a^2}.$$

Denote
\begin{equation*}
\U=\{P\in \grs{n}{m}:w(P)>0\}.
\end{equation*}
On $\U$ we can define
$$v=w^{-1}=\prod_\a\sec\th_\a.$$

Define
$$B_{JX}(P_0)=\big\{P\in \U:\mbox{ sum of any two Jordan angles} \mbox{between }P\mbox{ and }P_0<\f{\pi}{2}\big\}.$$
This is a geodesic convex set, larger than the geodesic ball of
radius $\frac{\sqrt{2}}{4}\pi$ and centered at $P_0$. This  was
found in a  previous work of Jost-Xin \cite{j-x}. For any real
number $a$ let $\V_a=\{P\in\grs{n}{m},\quad v(P)<a\}.$ From
(\cite{j-x}, Theorem 3.2) we know that
$$\V_2\subset B_{JX}\qquad \text{and}\qquad
\ol{\V}_2\cap\ol{B}_{JX}\neq\emptyset.$$

\begin{thm}\cite{x-y2}

$v$ is a convex function on $B_{JX}(P_0)\subset \U\subset
\grs{n}{m}$,  and
\begin{equation}\label{es4}
\Hess(v)\geq
v(2-v)g+\Big(\f{v-1}{pv(v^{\f{2}{p}}-1)}+\f{p+1}{pv}\Big)dv\otimes
dv
\end{equation}
on $\ol{\V}_2$, where $g$ is the metric tensor on $\grs{n}{m}$ and
$p=min(n,m)$.
\end{thm}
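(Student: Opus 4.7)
The plan is to compute $\Hess(v)$ pointwise at a fixed $P\in\ol\V_2$ using a Jordan frame adapted to $(P,P_0)$, and then extract the stated lower bound via a sharp symmetric-function inequality.

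At the point $P$, I would choose orthonormal bases $\{e_i\}$ of $P$ and $\{\ep_j\}$ of $\ir{n+m}$ realizing the Jordan decomposition of the pair $(P,P_0)$, so that $\langle e_i,\ep_i\rangle=\cos\theta_i$ for $i\le p=\min(n,m)$ and the remaining inner products vanish. The tangent space $T_P\grs{n}{m}\simeq \mbox{Hom}(P,P^\perp)$ carries the standard orthonormal basis $\{E_{i\alpha}\}_{i\le n,\,\alpha\le m}$, and geodesics from $P$ along $E_{i\alpha}$ are planar rotations in $\mbox{span}(e_i,\nu_\alpha)$. Along a general geodesic $\gamma(t)=\exp_P(tZ)$ with $Z=(z_{i\alpha})$, the Jordan angles $\theta_\alpha(t)$ of $\gamma(t)$ relative to $P_0$ evolve through the eigenvalues of the symmetric matrix $W(t)^TW(t)$, and I would expand them to second order in $t$ by standard perturbation theory.

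Next, differentiating $\log v(t)=-\sum_\alpha\log\cos\theta_\alpha(t)$ twice gives
$$\Hess(v)(Z,Z)=v\sum_\alpha\bigl(\sec^2\theta_\alpha\,(\theta'_\alpha)^2+\tan\theta_\alpha\,\theta''_\alpha\bigr)+v\Bigl(\sum_\alpha\tan\theta_\alpha\,\theta'_\alpha\Bigr)^2.$$
I would then substitute the expressions for $\theta'_\alpha(0),\theta''_\alpha(0)$ produced by the Grassmannian geodesic equation, which encode the sectional curvature of $\grs{n}{m}$. The off-diagonal components $z_{i\alpha}$ with $i\ne\alpha$ do not contribute to $dv$ but do contribute to $\Hess(v)$; the curvature terms they generate, when summed against the Jordan-angle weights, yield the "metric" piece $v(2-v)|Z|^2$. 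The factor $(2-v)$ reflects both the explicit form of the curvature tensor of $\grs{n}{m}$ and the constraint $v\le 2$ on $\ol\V_2$, and this same step also certifies the convexity statement on the larger convex set $B_{JX}(P_0)$.

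The main obstacle is isolating the sharp gradient coefficient $\f{v-1}{pv(v^{2/p}-1)}+\f{p+1}{pv}$ from the diagonal contributions $v\sum_\alpha\sec^2\theta_\alpha\,(\theta'_\alpha)^2$. This requires a sharp Maclaurin/power-mean inequality tying $\sum_\alpha\tan^2\theta_\alpha$ to $v^{2/p}$ under the product constraint $\prod_\alpha\sec\theta_\alpha=v$, with the extremal case occurring when all nontrivial Jordan angles coincide, combined with a Cauchy--Schwarz step to absorb the linear form $\sum\tan\theta_\alpha\,\theta'_\alpha$ that appears in $dv(Z)$. Verifying this inequality uniformly as some $\theta_\alpha\to 0$ and up to the boundary $v=2$, and correctly bookkeeping the block decomposition between diagonal and off-diagonal tangent directions, are the delicate points of the argument.
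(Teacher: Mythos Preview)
This statement is not proved in the present paper; it is quoted from \cite{x-y2} (Xin--Yang, Adv.\ Math.\ 2008), so there is no in-paper proof to compare your sketch against directly.

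That said, your outline matches the argument in that reference. There $\Hess(v)$ is computed at $P$ in the orthonormal frame $\{E_{i\a}\}$ of $T_P\grs{n}{m}$ adapted to the Jordan angles $\th_\a$, and the computation splits into diagonal and off-diagonal blocks exactly as you describe. The off-diagonal entries come out as $v(1\pm\tan\th_i\tan\th_\a)$; the lower bound $v(2-v)$ then follows from $v\ge\sec\th_i\sec\th_\a\ge 1+\tan\th_i\tan\th_\a$, which is just $(\tan\th_i-\tan\th_\a)^2\ge 0$, and convexity on $B_{JX}(P_0)$ from the defining condition $\th_i+\th_\a<\f{\pi}{2}$. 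For the $dv\otimes dv$ coefficient, the symmetric-function step you anticipate is the AM--GM bound $\sum_\a\tan^2\th_\a\ge p(v^{2/p}-1)$, obtained from $\prod_\a(1+\tan^2\th_\a)=v^2$, together with Cauchy--Schwarz to control $\big(\sum_\a\tan\th_\a\,z_{\a\a}\big)^2$. So the points you flag as delicate are in fact short elementary inequalities rather than real obstacles; your plan is the right one and is essentially the proof in \cite{x-y2}. One minor simplification available there: rather than expanding $\th_\a(t)$ to second order via eigenvalue perturbation, one can compute $\Hess(v)$ directly in the local matrix coordinates for $\grs{n}{m}$, which bypasses the perturbation bookkeeping.
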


Let
\begin{equation}
h=v^{-k}(2-v)^k
\end{equation}
define a positive function on $\V_2$, where $k=\f{3}{4}+\f{1}{2s}$
and $s=\min(m, n).$ From (\ref{es4}) we have (see (4.4) in
\cite{x-y2})
\begin{equation} \Hess(h)\le
-\left(\f{3}{2}+\f{1}{s}\right)h\ g,
\end{equation}
where $g$ is the metric tensor on $\grs{n}{m}.$

Let
\begin{equation}\label{h2}
h_1=h^{-2},
\end{equation}
then
\begin{equation}\begin{split}
\Hess(h_1)&=h_1'\Hess(h)+h_1''dh\otimes dh\\
&\ge \left(3+\f{2}{s}\right)h_1\ g+\f{3}{2}h_1^{-1}dh_1\otimes
dh_1,
\end{split}
\end{equation}
where $g$ is the metric tensor on $\grs{n}{m}.$

In each of the above cases we have a positive functions $h$ and
$h_1$ defined on an open subset on $\V\subset\grs{m}{n}$
satisfying
\begin{equation}\label{lh}
\Hess (h)\le -\la h g,
\end{equation}
where $\la=2$ in the first case and $\la>\f{3}{2}$ in the second
case and
\begin{equation}\label{lh1'}
\Hess (h_1)\ge \mu h_1 g+\f{3}{2}h_1^{-1}dh_1\otimes dh_1,
\end{equation}
where $\mu=4$ in the first case and $\mu>3$ in the second case.

For  $n$-dimensional submanifold  $M$ in $\R^{n+m}$. The Gauss map $
\gamma : M \to \grs{n}{m} $ is defined by
$$
 \g (x) = T_x M \in \grs{n}{m}
$$
via the parallel translation in $ \ir{m+n} $ for arbitrary $ x \in
M$. If $m=1$, the image of the Gauss map is the unit sphere. This is
just the hypersurface situation. Otherwise, the image of the Gauss
map is a Grassmannian manifold.

The energy density of the Gauss map (see \cite{x} Chap.3, \S 3.1) is
$$e(\g)=\f{1}{2}\left<\g_*e_i,\g_*e_i\right>=\f{1}{2}|B|^2.$$

We assume that the image of  $M$ under the Gauss map is contained in
$\V\subset\grs{m}{n}.$ Thus, we have the function $\td h=h\circ \g$
and $\td h_1=h_1\circ\g$ defined on $M$. We denote $h$ for $\td h$
and $h_1$ for $\td h_1$ in the sequel for simplicity.  Define
\begin{equation}\label{lam1}
(dh(\tau(\g))^+=\la_1h,
\end{equation} where $\tau(\g)$ is the
tension field of the Gauss map, which is zero when $M$ has parallel
mean curvature by Ruh-Vilms theorem \cite{r-v}.

From (\ref{lh}) and the composition formula we have
\begin{equation}\begin{split}
\De h&\le -\la |B|^2h+dh(\tau(\g))\\
&\qquad\le-\la|B|^2h+\la_1h.
\end{split}\end{equation}
We then obtain
$$\la|B|^2\phi^2\le -h^{-1}\phi^2\De h+\la_1\phi^2$$
and
$$\la\int_M|B|^2\phi^2*1\le-\int_M h^{-1}\phi^2\De h*1+\int_M \la_1\phi^2*1.$$
Since \begin{equation}\begin{split} -\int_M &h^{-1}\phi^2\De h*1
=-\int_M\n(h^{-1}\phi^2\n h)*1+\int_M\n(h^{-1}\phi^2)\n h*1\\
&=\int_M(\n h^{-1})(\n h)\phi^2*1+\int_Mh^{-1}\n h\n \phi^2*1\\
&=-\int_M h^{-2}|\n h|^2\phi^2*1+2\int_M h^{-1}\phi\n h\cdot\n\phi*1\\
&\le -\int_M h^{-2}|\n h|^2\phi^2*1 +\int_M h^{-2}\phi^2|\n
h|^2*1 +\int_M |\n\phi|^2*1\\
&=\int |\n\phi|^2*1,
\end{split}\end{equation}
we obtain
\begin{equation}\la\int_M|B|^2\phi^2\le\int_M|\n\phi|^2*1+\int_M
\la_1\phi^2*1\label{lp1} \end{equation} for arbitrary function
$\phi$ with compact support $D\subset M$.

Define
$$\mu_1=-h_1^{-1}(dh_1(\tau(\g))^-.$$
Then from (\ref{lh1'}) and the composition formula we have
\begin{equation}\label{dh2}
\De h_1\ge \mu h_1|B|^2+\f{3}{2} h_1^{-1}|\n h_1|^2-\mu_1 h_1,
\end{equation}
where $\mu>3$.

In the graphic situation the $v$-function on $\grs{n}{m}$ composed
with the Gauss map $\g$ is just
$$\De_f=\left[\text{det}\left(\de_{ij}+\sum_\a\pd{f^\a}{x^i}\pd{f^\a}{x^j}\right)\right]^{\f{1}{2}},$$
which is the volume element of our graph. It follows that
$\text{vol}(D_R(x))\le 2R^n$ for the second case. As for the first
case
$$\De_f\le\left(\sec\left(\f{\sqrt{2}}{4}\pi\right)\right)^s$$
with $s=\min(m, n)$. So in each cases we have
$$\text{vol}(D_R(x))\le C\ R^n$$
with the constant $C$ depending on $n$ and $m$.

\Section{$L^p-$Curvature estimates}{$L^p-$Curvature estimates}

\medskip

Replacing $\phi$ by $|B|^{1+q}\phi$ in (\ref{lp1}) gives
\begin{equation}\label{lp2}\begin{split}
\int_M |B|^{4+2q}\phi^2*1&\le \la^{-1}\int_M \big|\n
(|B|^{1+q}\phi)\big|^2*1
+\la^{-1}\int_M\la_1|B|^{2+2q}\phi^2\\
&=\la^{-1}(1+q)^2\int_M |B|^{2q}\big|\n|B|\big|^2\phi^2*1\\
&\qquad+\la^{-1}\int_M |B|^{2+2q}(|\n \phi|^2+\la_1\phi^2)*1\\
&\qquad+2\la^{-1}(1+q)\int_M |B|^{1+2q}\n|B|\cdot \phi\n\phi
*1.\end{split}
\end{equation}

Using Bochner type formula (\ref{LB}), which is equivalent to
\begin{equation}\label{lp3}\begin{split}
\f{2}{n+\ep}|\n|B||^2&\le |B|\De |B|+\f{(3+2\ep')}{2}|B|^4\\&\qquad
+\la_2|B|+\f{C(n,\ep)}{2}|\n H|^2+\f{C(n)}{2\ep'}|H|^2|B|^2.
\end{split}\end{equation}

Multiplying $|B|^{2q}\phi^2$ with both sides of (\ref{lp3}) and
integrating by parts, we have
\begin{equation}\label{lp4}\aligned
\f{2}{n+\ep}&\int_M |B|^{2q}\big|\n |B|\big|^2\phi^2*1\\
&\qquad\le-(1+2q)\int_M |B|^{2q}\big|\n|B|\big|^2\phi^2*1\\
&\qquad\qquad-2\int_M |B|^{1+2q}\n|B|\cdot \phi\n\phi*1
+\f{3+\ep'}{2}\int_M |B|^{4+2q}\phi^2 *1\\
&\qquad\qquad+\int_M\la_2|B|^{1+2q}\phi^2*1+\f{C(n)}{2\ep'}\int_M|H|^2|B|^{2+2q}\phi^2*1\\
&\qquad\qquad+\f{C(n,\ep)}{2}\int_M|\n H|^2|B|^{2q}\phi^2*1.
\endaligned
\end{equation}

By multiplying $\f{3+\ep'}{2}$ with both sides of (\ref{lp2}) and
then adding up both sides of it and (\ref{lp4}), we have
\begin{equation}\label{lp5}\aligned
&\big(\f{2}{n+\ep}+1+2q-\f{3+\ep'}{2}\la^{-1}(1+q)^2\big)\int_M |B|^{2q}\big|\n |B|\big|^2\phi^2*1\\
&\qquad\leq\f{3+\ep'}{2}\la^{-1}\int_M
|B|^{2+2q}|(\n\phi|^2+\la_1\phi^2)*1\\
&\qquad\qquad+\big((3+\ep')\la^{-1}(1+q)-2\big)\int_M
|B|^{1+2q}\n|B|\cdot
\phi\n\phi *1\\
&\hskip1in+\int_M\la_2|B|^{1+2q}\phi^2*1+\f{C(n)}{2\ep'}\int_M|H|^2|B|^{2+2q}\phi^2*1\\
&\hskip1.2in+\f{C(n,\ep)}{2}\int_M|\n H|^2|B|^{2q}\phi^2*1.
\endaligned\end{equation}
By using Young's inequality and letting $\ep=\ep'$, (\ref{lp5})
becomes
\begin{equation}\label{lp6}
\aligned
&\big(\f{2}{n+\ep}+1+2q-\f{3+\ep}{2}\la^{-1}(1+q)^2-\ep\big)\int_M |B|^{2q}\big|\n |B|\big|^2\phi^2*1\\
&\hskip1in\le C_1(\ep,\la,q,n)\int_M
|B|^{2+2q}(|\n\phi|^2+\la_1\phi^2+|H|^2\phi^2)*1\\
&\hskip1.2in + C_1(\ep,\la,q,n)\int_M\la_2|B|^{1+2q}\phi^2\\
&\hskip1.5in + C_1(\ep,\la,q,n)\int_M|\n
H|^2|B|^{2q}\phi^2.\endaligned
\end{equation}

If
\begin{equation}\label{co3}
\la>\f{3}{2}\big(1-\f{2}{n}\big),
\end{equation}
then
\begin{equation*}
\f{2}{n}+1+2q-\f{3}{2}\la^{-1}(1+q)^2>0
\end{equation*}
whenever
\begin{equation}\label{lp6'}
q\in
\Big[0,-1+\f{2}{3}\la+\f{1}{3}\sqrt{4\la^2-6\big(1-\f{2}{n}\big)\la}\
\Big).
\end{equation}
Thus we can choose $\ep$ sufficiently small, such that
\begin{equation}\label{lp7}\begin{split}
\int_M |B|^{2q}\big|\n |B|\big|^2\phi^2*1&\le C_2 \int_M
|B|^{2+2q}|(\n\phi|^2+\la_1\phi^2+|H|^2\phi^2)*1\\
&+C_2\int_M\la_2|B|^{1+2q}\phi^2*1+C_2\int_M|\n
H|^2|B|^{2q}\phi^2\end{split}
\end{equation}
where $C_2$ only depends on $n$, $\la$ and $q$.

Combining with (\ref{lp2}) and (\ref{lp7}), we can derive
\begin{equation}\label{lp8}\begin{split}
\int_M |B|^{4+2q}\phi^2*1&\le C_3(n,\la,q)\int_M |B|^{2+2q}|(\n
\phi|^2+\la_1\phi^2+|H|^2\phi^2)*1\\
&+C_3\int_M\la_2|B|^{1+2q}\phi^2*1+C_3\int_M|\n
H|^2|B|^{2q}\phi^2\end{split}\end{equation} by  using Young's
inequality again.

Replacing $\phi$ by $\phi^{q+2}$ in (\ref{lp7}) yields
\begin{equation}\label{lp9}\begin{split}
\int_M &|B|^{4+2q}\phi^{4+2q}*1\le C\int_M
|B|^{2+2q}\phi^{2+2q}|(\n
\phi|^2+\la_1\phi^2+|H|^2\phi^2)*1\\
&+C\int_M|B|^{1+2q}\phi^{1+2q}\la_2\phi^3*1+C\int_M|B|^{2q}\phi^{2q}\n
|H|^2\phi^4*1,\end{split}\end{equation} in what follows $C$ may be
different in different expressions which depending on  $n$, $\la$
and $q$.

By using Young's inequality, namely for any positive real number
$\a,\,a,\,b,\,s,\,t\,$ with $\frac 1s+\frac 1t=1$
$$\frac {\a^sa^s}s+\frac {\a^{-t}\,b^t}t\ge ab,$$
we have
\begin{equation*}\begin{split}
C|B|^{2+2q}\phi^{2+2q}|\n\phi|^2&\le
\ep|B|^{4+2q}\phi^{4+2q}+\a_0|\n\phi|^{4+2q},\\
C|B|^{2+2q}|H|^2&\le \ep|B|^{4+2q}+\a_1|H|^{4+2q},\\
C|B|^{2+2q}\la_1&\le\ep|B|^{4+2q}+\a_2\la_1^{2+q},\\ C|B|^{2q}|\n
H|^2&\le \ep|B|^{4+2q}+\a_3|\n H|^{2+q}, \\
C|B|^{1+2q}\la_2&\le \ep|B|^{4+2q}+\a_4\la_2^{\f{4+2q}{3}}.
\end{split}\end{equation*}
Finally, (\ref{lp9}) becomes
\begin{equation}\label{lp10}\begin{split}
&\int_M|B|^{4+2q}\phi^{4+2q}*1\\
&\le C\int_M|\n\phi|^{4+2q}*1\\
&\qquad+C\int_M\left(|H|^{4+2q}+|\n
H|^{2+2q}+\la_1^{2+2q}+\la_2^{\f{4+2q}{3}}\right)\phi^{4+2q}*1.
\end{split}
\end{equation}

Let $r$ be a function on $M$ with $|\n r|\le 1$. For any $R\in [0,
R_0]$, where $R_0=\sup_Mr$, suppose
$$M_R=\{x\in M,\quad r\le R\}$$
is compact.

(\ref{lp6'}) and (\ref{lp10}) enable us to prove the following
results.

\begin{thm}\label{t1}
Let $M$ be an $n$-dimensional submanifolds of $\ir{n+m}$ with mean
curvature $H$. If the Gauss image of $M_{2R}$ is contained in an
open geodesic ball of radius $\f{\sqrt{2}}{4}\pi$ in $\grs{n}{m}$,
then we have the $L^p$-estimate
\begin{equation}\label{Lp15}
\big\||B|\big\|_{L^p(M_R)}\leq C\
R^{-1}\text{Vol}(M_{2R})^{\f{1}{p}}
\end{equation}
for
$$p\in \left[4,4+\f{2}{3}+\f{4}{3}\sqrt{1+\f{6}{n}} \right),$$
where $C$ is depending on $n, R\sup_{M_{2R}}|H|,
R^2\sup_{M_{2R}}(|\n H|+ \la_1)$ and $R^3\sup_{M_{2R}}\la_2.$
\end{thm}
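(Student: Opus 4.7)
The plan is to apply the master inequality (\ref{lp10}) to a standard cutoff supported in $M_{2R}$ and extract the advertised bound by rescaling.

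First I specialize to the Gauss image assumption of the theorem. The positive function $h=\cos(\sqrt{2}\rho)$ from Section~3 satisfies $\mathrm{Hess}(h)\le -2hg$, so the abstract Hessian parameter from (\ref{lh}) is $\la=2$. This value automatically satisfies the structural hypothesis (\ref{co3}) for every $n\ge 1$, and substituting $\la=2$ into (\ref{lp6'}) gives
$$q\in\Bigl[0,\,-1+\tfrac{4}{3}+\tfrac{1}{3}\sqrt{16-12(1-\tfrac{2}{n})}\,\Bigr)=\Bigl[0,\,\tfrac{1}{3}+\tfrac{2}{3}\sqrt{1+\tfrac{6}{n}}\,\Bigr).$$
Setting $p:=4+2q$, this is exactly the interval $\bigl[4,\,4+\tfrac{2}{3}+\tfrac{4}{3}\sqrt{1+6/n}\bigr)$ claimed in the theorem.

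Next I build the cutoff. Pick $\eta\in C^1(\R)$ with $\eta\equiv 1$ on $(-\infty,1]$, $\eta\equiv 0$ on $[2,\infty)$ and $|\eta'|\le 2$, and set $\phi(x)=\eta(r(x)/R)$. Since $r$ is the restriction to $S$ of the Euclidean distance and thus $|\n r|\le 1$, $\phi$ is compactly supported in $M_{2R}$, equals $1$ on $M_R$, and satisfies $|\n\phi|\le 2/R$. Inserting this $\phi$ into (\ref{lp10}) and restricting the left-hand integration to $M_R$ (where $\phi^{4+2q}\equiv 1$) yields
$$\int_{M_R}|B|^p*1\le C(n,q)\int_{M_{2R}}\Bigl(|\n\phi|^p+|H|^p+|\n H|^{p/2}+\la_1^{p/2}+\la_2^{p/3}\Bigr)*1,$$
the exponents $p/2,\ p/3$ being precisely the ones produced by the Young's inequality balances displayed just above (\ref{lp10}).

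Finally I rescale. Estimating each integrand by its $M_{2R}$-supremum and writing
$$\sup|H|^p=R^{-p}(R\sup|H|)^p,\qquad \sup(|\n H|+\la_1)^{p/2}=R^{-p}\bigl(R^2\sup(|\n H|+\la_1)\bigr)^{p/2},$$
$$\sup\la_2^{p/3}=R^{-p}(R^3\sup\la_2)^{p/3},\qquad |\n\phi|^p\le 2^p R^{-p},$$
every summand on the right is bounded by a constant times $R^{-p}\mathrm{Vol}(M_{2R})$, with the constant depending only on $n,\,p$ and on the scale-invariant quantities listed in the statement. Extracting $p$-th roots then produces (\ref{Lp15}). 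No serious obstacle remains: the genuine analysis has been packaged entirely into (\ref{lp10}) via the Bochner--Simons formula of Section~2 together with the Hessian estimate for $h$ of Section~3, and the present theorem is merely its scale-invariant corollary; the only delicate point is the endpoint verification at the start, namely that with $\la=2$ the upper bound of (\ref{lp6'}) matches the $p_*=4+\tfrac{2}{3}+\tfrac{4}{3}\sqrt{1+6/n}$ appearing in the theorem.
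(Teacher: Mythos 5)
Your proof is correct and follows exactly the same route as the paper's (which compresses the argument to a single sentence): take a standard cutoff, plug it into (\ref{lp10}), and rescale; your explicit verification that $\la=2$ turns the admissible range (\ref{lp6'}) into $q\in[0,\tfrac13+\tfrac23\sqrt{1+6/n})$, i.e.\ $p\in[4,4+\tfrac23+\tfrac43\sqrt{1+6/n})$, is the right computation. You also tacitly repair a small typo in (\ref{lp10}): as the Young's inequality balances displayed just above it show, the exponents on $|\n H|$ and $\la_1$ should be $2+q=p/2$ (not $2+2q$), and this is precisely the exponent that makes the rescaling produce a constant depending on $R^2\sup(|\n H|+\la_1)$, matching the theorem's statement.
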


\begin{proof} Take $\phi\in C_c^\infty (M_R)$ to be the
standard cut-off function such that $\phi\equiv 1$ in $M_R$ and $|\n
\phi|\le C R^{-1}$; then (\ref{lp10}) yields
$$\int_{M_R}|B|^p*1\le C\ R^{-p}\text{Vol}(M_{2R}),$$
where $p=4+2q$. Thus the conclusion immediately follows from
(\ref{lp10}).
\end{proof}

\begin{thm}\label{t2}
Let $M$ be an $n$-dimensional submanifolds of $\ir{n+m}$ with the
mean curvature $H$. If the Gauss image of $M_{2R}$ is contained in
$\{P\in \U\subset \grs{n}{m}:v(P)<2\}$, then we have the estimate
\begin{equation}\label{lp11}
\big\||B|\big\|_{L^p(M_R)}\le C\ R^{-1}\text{Vol}(M_{2R})^{\f{1}{p}}
\end{equation}
for
$$p\in \left[4,4+\sqrt{\f{8}{n}}\right),$$
where $C$ is depending on $n, R\sup_{M_{2R}}|H|,
R^2\sup_{M_{2R}}(|\n H|+\la_1)$ and $R^3\sup_{M_{2R}}\la_2.$
\end{thm}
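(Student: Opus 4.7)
The plan is to reproduce the argument of Theorem \ref{t1}, line for line, but with the second Gauss image condition in place of the first. Under the hypothesis $\g(M_{2R})\subset\V_2$, Section 3 supplies the positive auxiliary function $h=v^{-k}(2-v)^k$ with $k=\f{3}{4}+\f{1}{2s}$, which satisfies (\ref{lh}) with $\la=\f{3}{2}+\f{1}{s}>\f{3}{2}$. Since $\la>\f{3}{2}(1-\f{2}{n})$ for all $n\ge 2$, the admissibility condition (\ref{co3}) holds, so the entire chain of inequalities (\ref{lp1})--(\ref{lp10}) runs unchanged with this value of $\la$.

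The only genuine computation is to identify which exponents $p$ the derivation produces. Feeding $\la=\f{3}{2}+\f{1}{s}$ into (\ref{lp6'}), I would observe that the upper endpoint
$$-1+\f{2\la}{3}+\f{1}{3}\sqrt{4\la^2-6\bigl(1-\f{2}{n}\bigr)\la}$$
is increasing in $\la$, so the worst case is the limit $\la\searrow\f{3}{2}$, which yields $q\in\bigl[0,\sqrt{2/n}\bigr)$ and hence $p=4+2q\in\bigl[4,4+\sqrt{8/n}\bigr)$; this is the range stated in the theorem, and any actual $s$ in fact gives a strictly wider admissible interval.

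Next I would insert a standard cut-off $\phi\in C_c^\infty(M_{2R})$ with $\phi\equiv 1$ on $M_R$ and $|\n\phi|\le C/R$ into (\ref{lp10}). The left-hand side then dominates $\int_{M_R}|B|^p*1$. On the right, the $|\n\phi|^{4+2q}$ term contributes at most $CR^{-p}\,\text{Vol}(M_{2R})$, while the $|H|$, $|\n H|$, $\la_1$ and $\la_2$ terms are bounded by their supremum norms over $M_{2R}$ times $\text{Vol}(M_{2R})$. Factoring $R^{-p}$ out of each bound converts these sup-norms into the three scale-invariant quantities $R\sup|H|$, $R^2\sup(|\n H|+\la_1)$, $R^3\sup\la_2$; they are absorbed into the final constant $C$, and taking the $p$-th root yields (\ref{lp11}).

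The main obstacle is purely cosmetic: one must verify that the constants $C_1,C_2,C_3$ produced by Young's inequality in (\ref{lp4})--(\ref{lp10}) remain finite, which is guaranteed as long as $q$ is strictly less than the endpoint in (\ref{lp6'}) --- equivalently, as long as $p$ is strictly less than $4+\sqrt{8/n}$. Otherwise the argument is a verbatim transcription of the proof of Theorem \ref{t1}, and no new analytic input is needed beyond the second-case Hessian estimate for $h=v^{-k}(2-v)^k$ already recorded in Section 3.
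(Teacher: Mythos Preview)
Your proposal is correct and follows exactly the approach of the paper: Theorem~\ref{t2} is obtained from the same chain (\ref{lp1})--(\ref{lp10}) and the same cut-off argument as Theorem~\ref{t1}, the only change being that in the second Gauss-image case one has $\la=\f{3}{2}+\f{1}{s}>\f{3}{2}$, and plugging the limiting value $\la=\f{3}{2}$ into (\ref{lp6'}) gives $q<\sqrt{2/n}$, i.e.\ $p<4+\sqrt{8/n}$. The paper in fact does not write out a separate proof for Theorem~\ref{t2} at all, treating both theorems as immediate consequences of (\ref{lp6'}) and (\ref{lp10}).
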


We now study the $L^p-$curvature estimates in terms of $h_1.$  From
(\ref{LB}) and (\ref{dh2}) we compute
\begin{equation}\begin{split}
\De(|B|^{2p}h_1^q)&\ge (\mu
q-3p-2\ep'p)|B|^{2p+2}h_1^q\\
&\qquad+2p\left(2p-1+\f{2}{n+\ep}\right)|B|^{2p-2}h_1^q|\n |B||^2\\
&+q(q+\f{1}{2})|B|^{2p}h_1^{q-2}|\n h_1|^2+4pq|B|^{2p-1}\n
|B| \cdot h_1^{q-1}\n h_1\\
&\qquad-\f{2p}{\ep'}|H|^2|B|^{2p}h_1^q-q\mu_1|B|^{2p}h_1^q\\
&\qquad-2p\la_2|B|^{2p-1}h_1^q-2pC(n,\ep)|B|^{2p-2}h_1^q|\n H|^2.
\end{split}\end{equation}
Using Young's inequality, when $p\ge
\f{1}{2}-\f{1}{n}+\left(1-\f{2}{n}\right)q$, we obtain
\begin{equation}\label{sh}\begin{split}
\De(|B|^{2p}h_1^q)&\ge (\mu q-3p-2\ep'p)|B|^{2p+2}h_1^q
-\f{2p}{\ep'}|H|^2|B|^{2p }h_1^q\\&\qquad
-q\mu_1|B|^{2p}h_1^q-2p\la_2|B|^{2p-1}h_1^q-2pC(n,\ep)|B|^{2p-2}h_1^q|\n
H|^2.\end{split}\end{equation} In particular, we have when $p\ge
n-1$
\begin{equation}\begin{split}
\De(|B|^{p-1}h_1^{\f{p}{2}}&\ge
\f{3}{2}|B|^{p+1}h_1^{\f{p}{2}}-\f{p-1}{\ep'}|H|^2|B|^{p-1
}h_1^{\f{p}{2}}-\f{p}{2}\mu_1|B|^{p-1}h_1^{\f{p}{2}}\\
&\qquad-(p-1)\la_2|B|^{p-2}h_1^{\f{p}{2}}-(p-1)C(n,\ep)|B|^{p-3}h_1^{\f{p}{2}}|\n
H|^2.\end{split}\end{equation}

Multiplying $|B|^{p-1}h_1^{\f{p}{2}}\e^{2p}$, integrating by
parts and using Young's inequality lead
\begin{equation}\begin{split}
\int_M&|B|^{2p}h_1^p\e^{2p}*1\le\f{2}{3}p^2\int_M|B|^{2p-2}h_1^p\e^{2p-2}|\n\e|^2*1\\
&+\f{2(p-1)}{3\ep'}\int_M|H|^2|B|^{2p-2}h_1^p\e^{2p}*1
+\f{p}{3}\int_M\mu_1|B|^{2p-2}h_1^p\e^{2p}*1\\
&\qquad\quad+\f{2}{3}(p-1)\int_M\la_2|B|^{2p-3}h_1^p\e^{2p}*1\\
&\qquad\qquad+\f{2}{3}(p-1)C(n,\ep)\int_M|B|^{2p-4}h_1^p|\n
H|^2\e^{2p}*1,
\end{split}\end{equation}
where $\e$ is a smooth function with compact support. By using
Young's inequality again, we obtain
\begin{equation}\label{lp14}\begin{split}
\int_M|B|^{2p}h_1^p\e^{2p}*1&\le C\int_Mh_1^p|\n\e|^{2p}*1\\
&+C\int_M\left(|H|^{2p}+\mu_1^p+\la_2^{\f{2}{3}p}+|\n
H|^p\right)h_1^p\e^{2p}*1,
\end{split}\end{equation}
where $C$ is a constant depending on $p$ and $n$. Take $\eta\in
C_c^\infty (D_{2R}(x))$ to be the standard cut-off function such
that $\eta\equiv 1$ in $D_R$ and  $|\n \eta|\le C R^{-1}$; then from
(\ref{lp14}) we have the following estimate.

\begin{thm}\label{t3}
Let $M$ be an $n$-dimensional minimal submanifold of $\ir{n+m}$.
If there exists a positive function $h_1$ on $M$ satisfying
(\ref{dh2}), then
\begin{equation}\label{lp15}
\int_M|B|^{2p}h_1^p\e^{2p}*1\le CR^{n-2p}\sup_{D_{2R}}h_1^p
\end{equation}
where $C$ depends on $p, n, m, R\sup_{D_{2R}}h_1^p|H|,
R^2\sup_{D_{2R}}(\mu_1+|\n H|), R^3\sup_{D_{2R}}\la_2$.
\end{thm}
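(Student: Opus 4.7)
The proof will be a direct application of the integral inequality (\ref{lp14}) already derived; essentially one just picks the appropriate cut-off and then bounds everything by suprema over $D_{2R}(x)$, using the volume bound from the end of \S 3.

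First I would take $\eta \in C_c^\infty(D_{2R}(x))$ with $\eta \equiv 1$ on $D_R(x)$, $0 \le \eta \le 1$, and $|\n\eta| \le C R^{-1}$. Substituting this $\eta$ into (\ref{lp14}), the left hand side dominates $\int_{D_R(x)} |B|^{2p} h_1^p *1$. On the right, the cut-off term is controlled by $C R^{-2p} \sup_{D_{2R}(x)} h_1^p \cdot \text{vol}(D_{2R}(x))$, while the second integral is supported on $D_{2R}(x)$ as $\eta$ is.

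Next I would pull out the suprema of $h_1^p, |H|^{2p}, \mu_1^p, \la_2^{2p/3}, |\n H|^p$ over $D_{2R}(x)$ and replace the remaining integral $\int_{D_{2R}(x)} *1 = \text{vol}(D_{2R}(x))$ by the uniform bound $\text{vol}(D_{2R}(x)) \le C R^n$ established at the end of \S 3 (the bound applies in either Gauss-image case that produces a positive $h_1$ satisfying (\ref{dh2})). Collecting, one obtains
\[
\int_{D_R(x)} |B|^{2p} h_1^p *1 \le C R^{n-2p} \sup_{D_{2R}(x)} h_1^p \cdot \Phi,
\]
where
\[
\Phi := 1 + \bigl(R\sup|H|\bigr)^{2p} + \bigl(R^2\sup\mu_1\bigr)^p + \bigl(R^2\sup|\n H|\bigr)^p + \bigl(R^3\sup\la_2\bigr)^{\f{2p}{3}}
\]
is precisely the dimensionless combination of the quantities listed as dependencies of $C$ in the statement, so it is absorbed into $C$.

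The argument presents no real obstacle once (\ref{lp14}) is in hand: the analytic content (the Bochner--Simons inequality, the Kato-type estimate (\ref{kato}), the composition formula producing (\ref{dh2}) from $h_1 = h^{-2}$, and the subsequent Young-inequality absorptions) has already been done. The only point requiring care is checking that the volume bound $\text{vol}(D_{2R}(x)) \le C R^n$ really is available under the hypotheses; this is automatic because the existence of $h_1$ satisfying (\ref{dh2}) forces the Gauss image to lie in one of the two subsets $\V \subset \grs{n}{m}$ treated in \S 3, in each of which the volume bound was verified via the $v$-function or the $\cos(\sqrt{2}\rho)$ control.
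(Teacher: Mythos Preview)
Your proposal is correct and matches the paper's argument exactly: the paper's proof is the single sentence preceding the theorem, namely insert the standard cut-off $\eta$ into (\ref{lp14}) and invoke the volume bound $\text{vol}(D_{2R}(x))\le CR^n$ from \S 3. The only cosmetic difference is that the stated conclusion keeps $\eta^{2p}$ in the integrand rather than restricting to $D_R$, but this is immediate from $\eta\equiv 1$ on $D_R$ and both are bounded by the same right-hand side.
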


\Section{Proof of the main results}{Proof of the main results}

\medskip

From $L^p-$estimates to pointwise estimates we need the following
mean value inequality in \cite{f-w}:

\begin{lem}\label{mv}
Let S be an $n-$graph in $\ir{m+n}.$ Suppose that $u$ is a
nonnegative solution of
\begin{equation}
\De u+Q u\ge g \qquad\text{on\;  S}
\end{equation}
where $Q\in L^{\f{q'}{2}}(S)$ and $g\in L^{\f{p'}{2}}(S)$ with $q',
p'>n$. If $D_{2R}(x)\subset\subset S$, then we have the estimates
\begin{equation}
\sup_{D_R(x)} u\le
C\left(R^{-\f{n}{2}}||u||_{L^2(D_{2R}(x))}+k(R)\right),
\end{equation}
where
\begin{equation}
k(R)=R^{2(1-\f{n}{p'})}||g||_{L^{\f{p'}{2}}}(D_{2R}(x)),
\end{equation}
the constants $C$ depending on $n,\ p',\ q',\
R^{2(1-\f{n}{q'})}||Q||_{L^{\f{q'}{2}}(D_{2R}(x))},\
R\sup_{D_{2R}(x)}|H|$ and $R^{-n}\text{Vol}(D_{2R}(x)).$
\end{lem}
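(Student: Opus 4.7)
The plan is to prove this mean value inequality via Moser iteration adapted to submanifolds, the standard tool for turning a differential inequality into a pointwise bound. The essential ingredient, since the ambient manifold is a submanifold of $\ir{m+n}$ rather than $\ir{n}$, is the Michael--Simon Sobolev inequality on submanifolds: for $f$ of compact support on $S$,
$$\left(\int_S|f|^{\f{n}{n-1}}*1\right)^{\f{n-1}{n}}\le C(n)\int_S(|\n f|+|H||f|)*1,$$
which after squaring $f$ yields the $L^{\f{2n}{n-2}}$ Sobolev estimate that drives the iteration.

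First I would multiply the differential inequality $\De u+Qu\ge g$ by $u^{\be-1}\phi^2$ for a nonnegative cutoff $\phi$ supported in $D_{2R}(x)$ and $\be\ge 1$, then integrate by parts to obtain
$$\int_S|\n u^{\be/2}|^2\phi^2*1\le C(\be)\int_S u^\be|\n\phi|^2*1+C(\be)\int_S Qu^\be\phi^2*1+C(\be)\int_S |g|u^{\be-1}\phi^2*1.$$
Applying the Michael--Simon inequality to $f=u^{\be/2}\phi$ and absorbing the $|H|^2u^\be\phi^2$ term with the help of $R\sup|H|$ converts this into an $L^{\f{2n}{n-2}}$ bound on $u^{\be/2}\phi$. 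The terms involving $Q$ and $g$ are then controlled by H\"older's inequality: since $q',p'>n$, one has
$$\int Qu^\be\phi^2\le \|Q\|_{L^{q'/2}}\Big(\int(u^{\be/2}\phi)^{\f{2q'}{q'-2}}\Big)^{\f{q'-2}{q'}},$$
and analogously for $g$, where the exponent $\f{2q'}{q'-2}$ lies strictly between $2$ and $\f{2n}{n-2}$, so these terms can be interpolated between $L^2$ and $L^{\f{2n}{n-2}}$.

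The result is a reverse H\"older inequality of Moser type,
$$\left(\int_{D_{\rho_1}}u^{\be\chi}*1\right)^{1/\chi}\le \f{C}{(\rho_2-\rho_1)^2}\int_{D_{\rho_2}}u^\be*1+C(\rho_2-\rho_1)^{2(1-\f{n}{p'})}\|g\|_{L^{p'/2}}^{\be}(\cdots),$$
valid for $\rho_1<\rho_2\in[R,2R]$ with $\chi=\f{n}{n-2}$. Iterating with $\be_k=2\chi^k$ and a geometric sequence of radii $\rho_k\downarrow R$ telescopes the exponents and yields $\sup_{D_R(x)}u$ on the left, with the $L^2$ norm on $D_{2R}(x)$ plus a contribution from $g$ on the right. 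Finally, identifying the constant from the iteration gives the stated $k(R)$ and the dependence of $C$ on $R^{2(1-n/q')}\|Q\|_{L^{q'/2}}$, $R\sup|H|$ and $R^{-n}\text{Vol}(D_{2R}(x))$.

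The main obstacle is bookkeeping: one must track the precise scale-invariant dependence of the constants through the iteration, and handle the inhomogeneity $g$ in a way that produces exactly the claimed additive error $k(R)$ rather than an error that blows up in the iteration. Since this is the statement quoted from \cite{f-w}, the cleanest route in practice is simply to invoke that reference once the differential inequality is in the correct form; the sketch above indicates how such a result is obtained.
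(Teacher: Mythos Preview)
Your proposal is correct and matches the paper's treatment: the paper does not prove this lemma at all but simply quotes it from \cite{f-w}, exactly as you note at the end. Your Moser-iteration sketch via the Michael--Simon Sobolev inequality is indeed the standard route by which such results are established, so no gap exists here.
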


\noindent {\bf Proof of Theorem \ref{x1}}
\medskip

 Now, noting (\ref{LB'}) we use Lemma \ref{mv} with
 $$u=|B|^2, \; Q=(3+2\sqrt{n})|B|^2,\;
 g=-2\la_2|B|\quad\text{and}\quad p'=2q'=2p.$$
 Using Theorem \ref{t1} and Theorem \ref{t2} have
 \begin{equation}\begin{split}
 ||Q||_{L^{\f{p}{2}}(D_{2R}(x))}&=(3+2\sqrt{n})\left(\int_{D_{2R}(x)}|B|^p*1\right)^{\f{2}{p}}\\
 &\le C\ R^{-2}\text{Vol}(D_{2R}(x))^{\f{2}{p}},
 \end{split}
 \end{equation}

 $$R^{2\left(1-\f{n}{p}\right)}||Q||_{L^{\f{p}{2}}(D_{2R}(x))}\le C\
 R^{-\f{2n}{p}}\text{Vol}(D_{2R}(x))^{\f{2}{p}},$$

 \begin{equation}\begin{split}
 ||g||_{L^{\f{p'}{2}}(D_{2R}(x))}=||g||_{L^p(D_{2R}(x))}&\le C\
 R^{-3}\left(\int_{D_{2R}(x)}|B|^p*1\right)^{\f{1}{p}}\\
 &\le C\ R^{-4}\text{Vol}(D_{2R}(x))^{\f{1}{p}},
\end{split}
 \end{equation}
 $$k(R)=R^{2\left(1-\f{n}{2p}\right)}||g||_{L^p(D_{2R}(x))}\le C\
 R^{-2}R^{-\f{n}{p}}\text{Vol}(D_{2R}(x))^{\f{1}{p}},$$
 $$||u||_{L^2(D_{2R}(x))}=\left(\int_{D_{2R}(x)}|B|^4*1\right)^{\f{1}{2}}\le\
 C R^{-2}\text{Vol}(D_{2R}(x))^{\f{1}{2}},$$
 $$R^{-\f{n}{2}}||u||_{L^2(D_{2R}(x))}\le C\ R^{-2}R^{-\f{n}{2}}\text{Vol}(D_{2R}(x))^{\f{1}{2}},$$
 Hence,
 $$\sup_{D_R(x)}u\le C\ R^{-2-\f{n}{2}}\text{Vol}(D_{2R}(x))^{\f{1}{2}}.$$
In \S 3 we have shown the volume growth under our Gauss image
assumption. We then finish the proof Theorem \ref{x1}.

\newpage
\noindent {\bf Proof of Theorem \ref{x2}}
\medskip

From (\ref{sh}) we also have (in case of $p\ge n-1$)
\begin{equation}\label{sh1}\begin{split}
\De (|B|^{2p}h_1^p)&\ge -\f{2p}{\ep'}|H|^2|B|^{2p}
h_1^p-p\mu_1|B|^{2p}h_1^p\\
&\qquad-2p\la_2|B|^{2p-1}h_1^p-2pC(n,\ep)|B|^{2p-2}h_1^p|\n
H|^2.\end{split}\end{equation}

By Young's inequality
\begin{equation}\begin{split}
|B|^{2p-1}h_1^p\la_2&=|B|^{2p-1}h_1^{p-\f{1}{2}}\la_2^{\f{2p-1}{3p}}h_1^{\f{1}{2}}\la_2^{\f{p+1}{3p}}\\
&\le C_1|B|^{2p}h_1^p\la_2^{\f{2}{3}}+C_2h^p\la_2^{\f{2(p+1)}{3}},\\
|B|^{2p-2}h_1^p|\n H|^2&=|B|^{2p-2}h_1^{p-1}|\n H|h_1|\n H|\\
&\le C_3|B|^{2p}h_1^p|\n H|^{\f{p}{p-1}}+C_4h^p|\n H|^p.
\end{split}\end{equation}
Then (\ref{sh1}) becomes
\begin{equation}\begin{split}
\De (|B|^{2p}h_1^p)&+ C(n,p)(|H|^2+\mu_1+\la_2^{\f{2}{3}}+|\n
H|^{\f{p}{p-1}})|B|^{2p}h_1^p\\
&\ge -C(n,p)(\la_2^{\f{2(p+1)}{3}}h^p+|\n H|^ph^p) .
\end{split}\end{equation}

We use Lemma \ref{mv} with
$$u=|B|^{2p}h_1^p,\quad Q=C(n, p)(|H|^2+\mu_1+\la_2^{\f{2}{3}}+|\n
H|^{\f{p}{p-1}}),$$ $$g=-C(n,p)(\la_2^{\f{2(p+1)}{3}}h^p+|\n
H|^ph^p)\quad \text{and}\quad  p'=2q'=2p>2n.$$ Since
\begin{equation}\begin{split}
\left(\int_{D_{2R}(x)}|H|^p*1\right)^{\f{2}{p}}&\le C
R^{\f{2n}{p}-2},\qquad
\left(\int_{D_{2R}(x)}\mu_1^{\f{p}{2}}*1\right)^{\f{2}{p}}\le C
R^{\f{2n}{p}-2},\\
\left(\int_{D_{2R}(x)}\la_2^{\f{p}{3}}*1\right)^{\f{2}{p}}&\le C
R^{\f{2n}{p}-2},\qquad \left(\int_{D_{2R}(x)}|\n
H|^{\f{p^2}{2(p-1)}}*1\right)^{\f{2}{p}}\le C
R^{\f{2n}{p}-\f{2p}{p-1}},
\end{split}\end{equation}we obtain
$$R^{2(1-\f{n}{p})}||Q||_{L^{\f{p}{2}}(D_{2R}(x))}\le C,$$
where the constant $C$ depending on $n,m, p, R\sup_{D_{2R}(x)}|H|,
R^2\sup_{D_{2R}(x)}(|\n H|+ \mu_1)$ and $R^3\sup_{D_{2R}(x)}\la_2.$
We also have
\begin{equation}\begin{split}
&\left(\int_{D_{2R}(x)}(\la_2^{\f{2(p+1)}{3}}h_1^p)^p*1\right)^{\f{1}{p}}\le
C R^{\f{n}{p}-p-1}\sup_{D_{2R}(x)}h_1^p\\
&R^{2(1-\f{n}{2p})}||\la_2^{\f{2(p+1)}{3}}h_1^p||_{L^p(D_{2R}(x))}\le
C R^{-p+1}\sup_{D_{2R}(x)}h_1^p
\end{split}\end{equation}
and \begin{equation} \left(\int_{D_{2R}(x)}(|\n
H|^ph_1^p)^p*1\right)^{\f{1}{p}}\le C
R^{\f{n}{p}-2p}\sup_{D_{2R}(x)}h_1^p,
\end{equation}
\begin{equation}
R^{2\left(1-\f{n}{2p}\right)}|||\n H|^ph_1^p||_{L^p(D_{2R}(x))}\le
CR^{-2p+2}\sup_{D_{2R}(x)}h_1^p .\end{equation} It follows that
$$k(R)\le C R^{-p+1}\sup_{D_{2R}(x)}h_1^p.$$
From Theorem \ref{t3} we have
$$\left(\int_{D_{2r}(x)}|B|^{4p}h_1^{2p}*1\right)^{\f{1}{2}}\le C
R^{\f{n}{2}-2p}\sup_{D_{2R}(x)}h_1^p$$ and
$$R^{-\f{n}{2}}|||B|^{2p}h_1^p||_{L^2(D_{2R}(x))}\le
R^{-2p}\sup_{D_{2R}(x)}h_1^p.$$

In the case of Gauss image is contained in a geodesic ball of radius
$\f{\sqrt{2}}{4}$
$$h_1=\sec^2(\sqrt{2}\rho).$$
It is easily seen that
$$\sec(\sqrt{2}\rho)\le C \left(\f{\sqrt{2}}{4}-\rho\right)^{-1}$$
for the constant $C>0$.

In the case of $\De_f<2$
$$h_1=\left(\f{v}{2-v}\right)^{-\left(\f{3}{2}+\f{1}{s}\right)}\le C(2-v)^{-\left(\f{3}{2}+\f{1}{s}\right)}$$
with the constant $C>0$. We then finish the proof of Theorem
\ref{x2}.

\bibliographystyle{amsplain}

\end{document}